\documentclass[12pt]{amsart}
\usepackage{latexsym,enumerate}
\usepackage{amssymb, xcolor,mathrsfs}
\usepackage{amsmath,amsthm,amsfonts,amssymb,latexsym, mathabx}
\usepackage{booktabs}
\usepackage{makecell}
\usepackage{listings}
\usepackage{xcolor}

\lstdefinelanguage{GAP}{
	morekeywords={if, then, elif, else, fi, for, while, do, od, function, local, return, end, and, or, not, in},
	sensitive=true,
	morecomment=[l]{\#},
	morestring=[b]",
	morestring=[b]',
}

\usepackage{comment}
\usepackage[
backend=biber,
style=numeric,      % numeric citation style (in-text = numbers)
sorting=nyt,        % ordering in bibliography (change if you want)
maxnames=10,
giveninits=true,    % show initials like I. M. Isaacs
uniquename=false,
doi=false,
isbn=false,
url=true,
]{biblatex}
\newtheorem{theorem}{Theorem}[section]
\newtheorem{lemma}[theorem]{Lemma}
\newtheorem{proposition}[theorem]{Proposition}

\newtheorem{question}[theorem]{Question}

\theoremstyle{definition}
\newtheorem{definition}[theorem]{Definition}

\newtheorem*{thmA}{Theorem A}
\newtheorem*{thmB}{Theorem B}

\newcommand{\Z}{\mathbb{Z}}

\newcommand{\F}{\mathbb{F}}

\newcommand{\la}{\langle}
\newcommand{\ra}{\rangle}

\DeclareMathOperator{\GL}{GL}

\DeclareMathOperator{\PSU}{PSU}
\DeclareMathOperator{\AGL}{AGL}
\DeclareMathOperator{\SO}{SO}

\DeclareMathOperator{\Soc}{Soc}

\newcommand{\Cent}{\mathbf{Z}}

\makeatletter
\newcommand{\hathat}[1]{% 
	\begingroup%
	\let\macc@kerna\z@%
	\let\macc@kernb\z@%
	\let\macc@nucleus\@empty%
	\hat{\raisebox{.35ex}{\vphantom{\ensuremath{#1}}}\smash{\hat{#1}}}%
	\endgroup%
}
\makeatother

\DeclareCiteCommand{\tabcite}%[\mkbibbrackets]
{\usebibmacro{cite:init}%
	\usebibmacro{prenote}}
{\usebibmacro{citeindex}%
	\usebibmacro{cite:comp}}
{}
{\usebibmacro{cite:dump}%
	\usebibmacro{postnote}}

\DefineBibliographyStrings{english}{%
	bibliography = {BIBLIOGRAPHY},
}

\usepackage{setspace}
\AtBeginBibliography{\singlespacing}
\DeclareFieldFormat{labelnumber}{#1}

\DeclareFieldFormat[article]{year}{(#1)}
\DeclareFieldFormat[book]{year}{#1}

\DeclareFieldFormat[article,book,inproceedings,thesis,unpublished,misc]{title}{#1}

\DeclareFieldFormat[article]{volume}{\mkbibbold{#1}}
\DeclareFieldFormat{journaltitle}{\mkbibemph{#1}}
\DeclareFieldFormat[book]{title}{\mkbibemph{#1}}
\DeclareFieldFormat{pages}{#1}
\DeclareFieldFormat{doi}{#1}
\DeclareFieldFormat[online]{title}{#1}
\DeclareFieldFormat{howpublished}{#1}
\DeclareFieldFormat{eprint}{preprint, arXiv:#1}
\DeclareFieldFormat{archivePrefix}{#1}

\renewbibmacro{in:}{}

\DeclareBibliographyDriver{article}{%
	\printnames{author}%
	\newunit\newblock
	\printfield{title}% 
	\newunit\newblock
	\printfield{journaltitle}%
	\setunit{\addspace}%
	\printfield{volume}%
	\iffieldundef{number}{}{%
		(\printfield{number})%
	}%
	\setunit{\addcomma\addspace}%
	\iffieldundef{pages}{\addcomma}{
		\printfield{pages}%
	}
	\printfield{year}%
	\finentry
}

\DeclareBibliographyDriver{book}{%
	\printnames{author}%
	\newunit\newblock
	\printfield{title}%
	\iffieldundef{edition}{}{%
		\setunit{\addcomma\space}%
		\printfield[edition]{edition}%
		,
	}%
	\finentry
	\setunit{\space}
	\newunit\newblock
	\printlist{publisher}%
	\setunit{\addcomma\space}%
	\printfield{year}%
	.% Only print edition if it exists
	
}

\DeclareBibliographyDriver{misc}{%
	\printnames{author}%
	\newunit\newblock
	\printfield{title}%
	\setunit{\addcomma\space}%
	\iffieldundef{note}{}{%
		\printfield{eprint}
		\addcomma
	}%
	\addspace
	\printfield{year}%
	\finentry
}

\DeclareBibliographyDriver{unpublished}{%
	\printnames{author}%
	\newunit\newblock
	\printfield{title}%
	\iffieldundef{year}
	{}
	{%
		\setunit{\addcomma\space}%
		\printfield{year}%
	}%
	\finentry
}

\DeclareBibliographyDriver{online}{%
	\printnames{author}%
	\newunit\newblock
	\printfield{title}% 
	\newunit\newblock
	\printfield{howpublished}
	\setunit{\addcomma\space}
	\printfield{year}%
	\newunit\newblock
	\printfield{url}
	\finentry
}

\begin{document}
	
	\title[$B$-groups of order $p^4$]{A Determination of $B$-groups of Order $p^4$}

	\author[Christopher Herbig]{Christopher Herbig}
	\address{%
		Little Priest Tribal College\\
		Winnebago, NE 68071 \\
		USA}
	\email{christopher.herbig@littlepriest.edu}

	\subjclass[2020]{Primary 20B10, 20B15}
	
	\keywords{$B$-groups, permutation groups, finite groups}

	\date{\today}
	
	\maketitle
	
	\begin{abstract}
		A group $H$ is said to be a $B$-group if whenever a primitive permutation group $G$ contains a regular subgroup isomorphic to $H$, then $G$ is doubly transitive. Going as far back as William Burnside, several authors have investigated whether certain families of groups are $B$-groups. Using the O'Nan-Scott Theorem as our starting point, we classify the $B$-groups of order $p^4$ where $p$ is a prime.
	\end{abstract}
	
	%\tableofcontents
	
	%%%%%%%%%%%%%%%%%%%%%%%%%%%%%%%%%%%%%%%%%%%%%%%%%%%%%%%%%%%%%%%%%%%%%%
	\section{Introduction}
	
	We begin by introducing some notation. All groups considered are assumed to be finite. We will let $C_n$, $Q_n$, and $D_n$  denote respectively the cyclic, generalized quaternion, and dihedral groups of order $n$. We take $H_n(p)$ to denote the group of $n \times n$ upper unitriangular matrices with entries in the field $\F_p$. We shall usually take $\Omega$ to denote an abstract set, and $S^\Omega$ will denote the symmetric group over $\Omega$. Otherwise, if the set being acted upon is irrelevant, we will denote the symmetric group of degree $n$ as $S_n$. We say that a permutation group $G \leq S^\Omega$ is \emph{uniprimitive} if $G$ acts primitively but not doubly transitively on $\Omega$. Generally though, our notation will follow that of Helmut Wielandt in \cite{W}.
	
	\begin{definition}
		Let $H$ be a group. We say that $H$ is a $B$-group if there exists no uniprimitive permutation group containing a regular subgroup isomorphic to $H$.
	\end{definition} 
	The study of $B$-groups was essentially initiated by William Burnside, who stated in the second edition of his book (cf. \cite[\S252]{Burn}) that cyclic groups of prime power order $p^a$ with $a \geq 2$ are $B$-groups. On the other hand, it is easy to see that groups of order $p$ are never $B$-groups, since a subgroup of $S_p$ generated by a $p$-cycle is itself uniprimitive.
	
	Since Burnside's initial work, much work on $B$-groups has been completed by various authors, and adequately summarizing the known results on $B$-groups to date would prove difficult. Using the classification of finite simple groups, Li in \cite[Corollary 1.3]{Li} has proven a complete result for abelian groups of order divisible by at least two distinct primes. Before stating this result, we shall state a definition introduced by Mark Wildon in \cite{Wild2}. 
	
	\begin{definition}
		We say that a group $H$ is $m$\emph{-factorizable} for some integer $m$ if $H$ isomorphic a direct product, say $H \cong \prod_{i=1}^k H_i$ where $k \geq 2$ and $m = |H_1| = |H_2| = \ldots = |H_k|$.
	\end{definition}
	Li's result is as follows:
	\begin{theorem}\label{Li-Corollary}[Li, 2003]
		Let $H$ be an abelian group whose order is divisible by at least two distinct primes. $H$ is not a $B$-group iff $H$ is $m$-factorizable for some integer $m$.
	\end{theorem}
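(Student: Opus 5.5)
This is Li's theorem, which the paper quotes from the literature; here is how I would reprove it. The plan splits into the easy direction (an $m$-factorizable $H$ is not a $B$-group) and the hard direction (otherwise, every primitive group with a regular abelian subgroup $H$ of this order is $2$-transitive).

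\emph{Easy direction.} Suppose $H\cong H_1\times\cdots\times H_k$ with $k\ge 2$ and $|H_i|=m$ for all $i$. Since $|H|$ has at least two prime divisors, so does... actually only $m\ge 2$ is needed. Choose a primitive non-regular group $T$ of degree $m$ containing a regular copy of $H_1$, for instance $S_m$ in its natural action. For each $i$, fix a bijection $\Delta\to H_i$ so that $H_i$ acts regularly on $\Delta$ with $|\Delta|=m$; after identification all $H_i$ sit regularly inside $S_m=S^\Delta$. Take the product action wreath product $G=S_m\wr S_k$ acting on $\Omega=\Delta^k$. It contains $H_1\times\cdots\times H_k$ acting regularly coordinatewise. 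For $m\ge 5$ this group is primitive by the standard criterion (the component group is primitive but not regular, and $S_k$ is transitive). It is not $2$-transitive, since the Hamming distance between points is a $G$-invariant with at least two nonzero values when $k\ge 2$. For small $m$ (namely $m\le 4$) I would instead pick a suitable primitive non-regular $T\le S_m$ containing $H_1$, or argue directly; this is routine, since $m\ge 2$ and $S_m$ with $m\ge 3$ is primitive and non-regular. The one exception is $m=2$, where $S_2$ is regular and the construction fails. But $m=2$ forces $H$ to be a $2$-group, which is excluded by the hypothesis. So $H$ is not a $B$-group.

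\emph{Hard direction.} Let $G\le S^\Omega$ be primitive and not $2$-transitive, with a regular abelian subgroup $H$ of order $n$ divisible by two primes. I would run through the O'Nan--Scott types of $G$.
\begin{itemize}
\item \textbf{Affine type.} $|\Omega|=n$ is a prime power, which is impossible.
\item \textbf{Diagonal and twisted wreath types.} Here the socle $T^k$ has a regular or near-regular structure, and a regular abelian subgroup would force $T$ to have an abelian factorization. This is excluded by Li's prior results, ultimately by the CFSG fact that a nonabelian simple group has no factorization $T=AB$ with $A$ abelian and $|A||B|$ small in this sense.
\item \textbf{Almost simple type.} Use the classification of primitive almost simple groups containing abelian regular subgroups (Li's Theorem 1.1 in that paper). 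Such groups are either $2$-transitive (e.g.\ $A_n$, $S_n$, $\mathrm{PGL}$ on projective points with a Singer cycle) or occur only for cyclic regular subgroups of prime power order.
\item \textbf{Product type.} $G\le K\wr S_k$ acting on $\Delta^k$ with $K$ primitive of almost simple or diagonal type. Then $H\cap K^k$, together with the transitivity of $H$ on the coordinates, should yield a decomposition $H=\prod H_i$ with the $H_i$ regular on $\Delta$, so $|H_i|=|\Delta|$ and $H$ is $m$-factorizable.
\end{itemize}

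The main obstacle is the product-type step. There one must show that a regular abelian subgroup of $K\wr S_k$ actually splits as a direct product of regular subgroups of the component, rather than merely permuting coordinates transitively. I would first show that $H\le K^k$; otherwise a cyclic coordinate-permuting part combined with abelianness would force a regular abelian subgroup of $K$ of the same order on a diagonal, which contradicts the almost simple classification. I would then project $H$ to each factor and use abelianness and regularity to conclude that the projections are regular and that $H$ equals their product. The almost simple classification itself I would cite from the literature (via CFSG) rather than reprove.
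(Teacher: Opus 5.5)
Your easy direction is exactly the paper's argument in Proposition~\ref{W25-7}, i.e.\ Wielandt's embedding of $H_1\times\cdots\times H_k$ as a regular subgroup of $S_m\wr S_k$ in product action. Note that $|H|=m^k$ having two prime divisors forces $m$ to have two prime divisors, so $m\ge 6$ and your discussion of $m\le 4$ and $m=2$ is unnecessary.

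The paper does not prove the converse; it quotes Li's CFSG-based result. Your converse is likewise a reduction to Li's classification rather than a proof. The one place where you give your own reason in the hard part is wrong. In the diagonal/twisted wreath bullet, the ``fact'' that a nonabelian simple group has no factorization $T=AB$ with $A$ abelian is false: for odd $n$, $A_n=A_{n-1}\la(1\,2\,\cdots\,n)\ra$. The qualifier ``$|A||B|$ small in this sense'' does not make it a usable statement. What holds elementarily is It\^o's theorem, that $T$ is never a product of two abelian subgroups. That only rules out $H\le T\times T$ in the holomorph case. The simple diagonal case in general needs Li's analysis, so there you must cite rather than argue.

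On the other hand, the step you call the main obstacle is elementary and needs no classification. Your suggested route, via ``a regular abelian subgroup of $K$ of the same order on a diagonal'' and the almost simple list, is not the right mechanism. Here is the direct argument.
\begin{enumerate}
\item Let $G\le S^\Delta\wr S_k$ in product action with $m=|\Delta|\ge 3$, and let $B=H\cap (S^\Delta)^k$. Let $\bar H\cong H/B$ be the image of $H$ in $S_k$, with orbits $O_1,\dots,O_r$ on coordinates of sizes $s_j$.
\item Since $\bar H$ is abelian, it is regular on each $O_j$, so $|\bar H|\le\prod_j s_j$.
\item Since $H$ is abelian, it centralizes $B$. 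So on each $O_j$, every coordinate of $b\in B$ is obtained from one fixed coordinate by conjugations independent of $b$.
\item Hence $B$ preserves a ``twisted diagonal'' subset of $\Delta^k$ of size $m^r$. It acts semiregularly on that subset, so $|B|\le m^r$.
\item Thus $m^k\le m^r\prod_j s_j$, that is, $\prod_j m^{s_j-1}\le \prod_j s_j$. Since $m\ge 3$, this forces every $s_j=1$, so $H\le (S^\Delta)^k$.
\item Each projection $\pi_i(H)$ is abelian and transitive on $\Delta$, hence regular of order $m$. Since $H\le\prod_i\pi_i(H)$ and both have order $m^k$, we get $H=\prod_i\pi_i(H)$, so $H$ is $m$-factorizable.
\end{enumerate}
This settles product type, including compound diagonal. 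It also settles twisted wreath type, since such $G$ lies in $\mathrm{Hol}(T^k)=\mathrm{Hol}(T)\wr S_k$ acting on $T^k$ in product action. So only the almost simple and simple diagonal cases actually require Li's theorem.
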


	At least when handling abelian groups, Li's result allows us to restrict our attention to $p$-groups. For the purpose of this note, however, we will instead focus on classifying possibly nonabelian $B$-groups of order $p^4$. The problem of determining whether nonabelian groups belonging to a particular family are $B$-groups has received much less attention, but some results are known (See Theorem \ref{Kochs-thm} below).

	As mentioned above, groups of order $p$ are never $B$-groups. As for groups of order $p^2$, the cyclic groups of this order are $B$-groups by the above mentioned result of Burnside. On the other hand, for odd $p$, elementary abelian $p$-groups of order $p^2$ are never $B$-groups (see Proposition \ref{W25-7} below). When $p=2$ however, one verifies by checking the subgroups of $S_4$ that the Klein-four group is a $B$-group.
	
	The groups of order $p^3$ are somewhat trickier to handle as there are five isomorphism classes to consider. The abelian groups of order $p^3$ were handled by the work of W. Burnside \cite{Burn}, R. Kochend\"orffer \cite{Koch}, and H. Wielandt \cite{W}. The two nonabelian groups of this order were handled by G. Jones in \cite{Jones}. Their results are summarized in the Theorem \ref{BKWJ} below.
	
	For the groups of order $p^4$, we have produced the following result, whose proof we shall give in the last section:
	\begin{thmA}
		If $H$ is a group of order $p^4$ for a prime $p$, then $H$ is a $B$-group iff one of the following holds:
		\begin{itemize}
			\item[(i)] $p$ is any prime, and $H$ is isomorphic to either $C_{p^4}$ or $C_{p^3} \times C_p$.
			\item[(ii)] $p \geq 5$, and $H$ is isomorphic to a group of type (vi), (vii), (xii), or (xiii) in Table 1.
			\item[(iii)] $p = 2$, and $H$ is isomorphic to either $Q_{16}$, $D_{16}$, $Q_8 \times C_2$, or a central product of $D_8$ and $C_4$ given by \texttt{SmallGroup(16,13)}.
			\item[(iv)] $p = 3$, and $H$ is isomorphic to either $C_{27} \rtimes C_3$ (\texttt{SmallGroup(81,6)}), a non-split extension of $C_3^2$ by $C_3^2$ (\texttt{SmallGroup(81,10)}), or a central product of $H_3(3)$ with $C_9$ (\texttt{SmallGroup(81,14)}).
		\end{itemize}  
	\end{thmA}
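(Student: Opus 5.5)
We reduce via the O'Nan--Scott Theorem. Suppose $G \leq S^\Omega$ is primitive with a regular subgroup $H$ of order $p^4$, so $|\Omega| = p^4$. Since the degree is a prime power, the O'Nan--Scott types with regular socle of diagonal or twisted wreath shape cannot occur. The admissible types are three:
\begin{itemize}
\item[(a)] affine groups $G \leq \AGL_4(p)$;
\item[(b)] almost simple groups of degree $p^4$;
\item[(c)] product action groups $G \leq K \wr S_m$ on $\Delta^m$ with $|\Delta|^m = p^4$, so $m \in \{2,4\}$ and $K$ primitive of degree $p^2$ or $p$.
\end{itemize}
For (b), the classification of primitive groups of prime power degree (Guralnick) gives the list. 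When $G$ is not doubly transitive, this essentially leaves $\PSU_4(2)$ acting on $81$ points, plus $S_{n}$ or $A_n$ on $2$-sets for which $\binom{n}{2}$ is a prime power ($n=4$ is excluded, and this case fails for $p^4$ in general). Hence almost simple groups can only matter for $p=3$, and for $p=2$ only the small groups of degree $16$ do, which can be enumerated.

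The core work is therefore to show that every $H$ \emph{not} on the list embeds regularly in some uniprimitive group. We use two constructions. The first is the product action: if $H$ has a regular embedding into $K \wr S_m$ acting primitively on $\Delta^m$, with $K$ primitive but not $2$-transitive on $\Delta$ (or with $m \geq 2$, which already suffices since product action is never $2$-transitive), then $H$ is not a $B$-group. In particular, any $H$ admitting a chain of subgroups giving a regular action inside such a wreath product is excluded. This covers every $H$ that is $p^2$-factorizable, or more generally a suitable extension such as a subgroup of $L \wr C_2$ with $L$ regular of order $p^2$ on $\Delta$, which is available whenever $H$ has an index-$2$ type or a normal subgroup structure compatible with the wreath product. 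The second construction is the affine one: $H$ is excluded whenever it embeds regularly in $\AGL_4(p)$ together with an irreducible but not transitive (on nonzero vectors) subgroup $G_0 \leq \GL_4(p)$ normalizing the resulting translation-module structure. In practice, one realizes $H$ as a regular subgroup $\{(v,\phi(v))\}$ of $V \rtimes U$ with $U$ a unipotent $p$-subgroup, and then adjoins a suitable $p'$-element, for instance a scalar of order $>2$ when $p \geq 5$, or a torus element, so that $\la U, t\ra$ generates an irreducible uniprimitive group. Proposition~\ref{W25-7}, Theorem~\ref{BKWJ}, Theorem~\ref{Kochs-thm} and Theorem~\ref{Li-Corollary} give inductive tools. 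For example, if $H \cong H_1 \times H_2$ with $|H_i| = p^2$ and $p$ odd, product action applies directly.

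Conversely, for each listed $H$ we must show that every primitive $G$ containing it regularly is $2$-transitive. For $C_{p^4}$ and $C_{p^3}\times C_p$ this is the classical Burnside--Schur/Wielandt argument via Schur rings: cyclic and nearly cyclic groups force imprimitivity or $2$-transitivity. The groups (vi), (vii), (xii) and (xiii) should have exponent $p^3$ or a large cyclic subgroup, which rules out regular embeddings into $\AGL_4(p)$ because regular subgroups there have exponent at most $p^{\lceil \log_p 5\rceil}$. They are also ruled out of product action, since the projection to $K \wr S_m$ forces a homocyclic-like structure. The $p=2,3$ exceptions are settled by direct enumeration of the primitive groups of degree $16$ and $81$ in GAP, checking for regular subgroups by \texttt{SmallGroup} ID. The main obstacle is the affine case for $p \geq 5$. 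There we must decide precisely which of the fifteen groups embed regularly in $\AGL_4(p)$ with a uniprimitive overgroup. I would first classify the regular subgroups of $\AGL_4(p)$ of order $p^4$ up to isomorphism, using exponent and class bounds, and then for each one exhibit an explicit irreducible non-transitive $G_0$.
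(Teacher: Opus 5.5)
There is a genuine gap, and it sits exactly where the paper does its main work.

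For $p\ge 5$, your own bound says that regular subgroups of $\AGL_4(p)$ have exponent $p^{\lceil\log_p 5\rceil}=p$. Also $S_p\wr S_4$ has only elementary abelian regular subgroups, since $p\nmid 4!$. So types (viii)--(xi), which have exponent $p^2$, can be shown to be non-$B$-groups only inside $S_{p^2}\wr S_2$. For odd $p$, the $p$-subgroups of that wreath product lie in the base group $S_{p^2}\times S_{p^2}$. The affine case is therefore the easy one, not the ``main obstacle''.

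Your product-action construction, however, only uses $L_1\times L_2$ with each $L_i$ regular of order $p^2$. A regular subgroup of order $p^4$ there is all of $L_1\times L_2$, which is abelian, and the ``index-$2$'' variant is vacuous for odd $p$. As written, your plan exhibits no nonabelian non-$B$-group at all.

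The missing idea is the coset criterion of Lemma~\ref{fund_prod_lem}: $H$ is regular in $S_{p^2}\times S_{p^2}$ iff it has subgroups $K_1,K_2$ of order $p^2$ with $K_1^{g_1}\cap K_2^{g_2}=1$ for all $g_1,g_2\in H$. The embedding is $h\mapsto(\rho_1(h),\rho_2(h))$, coming from the actions on the cosets of $K_1$ and $K_2$. The two projections need not be regular, and for nonabelian $H$ they cannot both be. With this criterion one just exhibits pairs: $\la a\ra,\la b\ra$ for (viii); $\la a\ra,\la b,c\ra$ for (ix)--(xi); $\la a,c\ra,\la b,d\ra$ for (xiv); $\la a,d\ra,\la b,c\ra$ for (xv).

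Your affine fallback does not fill the hole either. Adjoining a scalar, or a diagonal element in a basis that triangularizes the unipotent group $U$, keeps $\la U,t\ra$ inside a Borel subgroup, which is reducible.

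The converse direction has a matching hole. ``Homocyclic-like structure'' is not an argument, and it cannot be made into one on exponent grounds, since (viii)--(xi) also have exponent $p^2$ and do embed. You must show that (vi), (vii), (xii), (xiii) admit no pair $K_1,K_2$ as above.
\begin{itemize}
\item For (vi) and (vii), $\Cent(H)\cong C_{p^2}$. If $K_i\cap\Cent(H)=1$, then $\Cent(H)K_i=H$ would be abelian. So both $K_i$ contain the unique subgroup of order $p$ in $\Cent(H)$, and hence meet nontrivially.
\item For (xii) and (xiii), every subgroup of order $p^2$ contains $\Cent(H)=\la a^p\ra$. The cyclic ones do because $(gh)^p=g^ph^p$ by Lemma~\ref{hall-pet}. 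The elementary abelian ones do because they lie in $\Omega_1(H)\cong H_3(p)$, where every $C_p\times C_p$ contains the centre, and that centre is $\Cent(H)$.
\end{itemize}

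Finally, there is a factual slip in your almost simple case. $\PSU_4(2)$ is uniprimitive on $27$ points, not $81$, and there is no $2$-set case. Guralnick's theorem, together with the fact that a transitive socle acting $2$-transitively forces $G$ to be $2$-transitive, shows there is no almost simple uniprimitive group of degree $p^4$ for any $p$. The error is harmless only because, like the paper, you settle $p=2,3$ by machine.
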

	
	Our approach is to use the O'Nan-Scott Theorem, which classifies primitive permutation groups into a handful of categories. In our case, the O'Nan-Scott Theorem reduces to the following:
	\begin{theorem}
		If $G \subseteq S_{p^a}$ is primitive for a prime $p$ and an integer $a$, then one of the following holds:
		\begin{itemize}
			\item[(i)] $G$ is an almost simple group acting faithfully on the cosets of a subgroup of index $p^a$. 
			\item[(ii)] $G$ is identified with a subgroup of the primitive wreath product $S_k \wr S_\ell$, where $p^a = k^\ell$.
			\item[(iii)] $G$ is of affine type, i.e., $G$ is a semidirect product of the form $C_p^a \rtimes L$ where $L \leq \GL(a,p)$ acts irreducibly on an $a$-dimensional $\F_p$-vector space.
		\end{itemize}
	\end{theorem}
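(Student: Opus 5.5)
The plan is to deduce this from the full O'Nan--Scott Theorem (in the Liebeck--Praeger--Saxl form). That theorem says a finite primitive group $G \leq S^\Omega$ belongs to one of the following types, according to its socle $N = \Soc(G)$:
\begin{itemize}
\item[(a)] affine;
\item[(b)] almost simple;
\item[(c)] diagonal type;
\item[(d)] product type, i.e.\ contained in a primitive wreath product $H \wr S_\ell$ in product action;
\item[(e)] twisted wreath type.
\end{itemize}
I would go through these types with $|\Omega| = p^a$ and show that only (a), (b) and (d) can occur, and that these give exactly (iii), (i) and (ii) of the statement.

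\textbf{Excluding the two bad types.} In diagonal type, $N \cong T^k$ with $T$ nonabelian simple and $k \geq 2$, and $|\Omega| = |T|^{k-1}$. In twisted wreath type, $N \cong T^k$ acts regularly, so $|\Omega| = |T|^k$. By Burnside's $p^\alpha q^\beta$ theorem, a nonabelian simple group has order divisible by at least three distinct primes. Hence $|T|^{k-1}$ and $|T|^{k}$ are never prime powers, so neither type can occur when the degree is $p^a$.

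\textbf{Identifying the three surviving types.}
\begin{itemize}
\item[(b)] In the almost simple case, $T \leq G \leq \mathrm{Aut}(T)$. The action is transitive on $p^a$ points, so $\Omega$ is identified with the cosets of a point stabilizer of index $p^a$. The action is faithful because $G \leq S^\Omega$. This gives (i).
\item[(d)] In the product case, $\Omega = \Delta^\ell$ with $\ell \geq 2$ and $G \leq H \wr S_\ell \leq S^\Delta \wr S_\ell$. Setting $k = |\Delta|$ gives $k^\ell = p^a$, hence $k = p^{a/\ell}$. This gives (ii). I would simply absorb any compound diagonal variant into this case, since the statement only asks for containment in $S_k \wr S_\ell$.
\item[(a)] In the affine case, $N$ is elementary abelian and regular. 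Since $|N| = p^a$, we have $N \cong C_p^a$, and $G = N \rtimes G_\omega$. Here $G_\omega = L$ embeds in $\mathrm{Aut}(N) = \GL(a,p)$. The blocks of $G$ containing $\omega$ correspond to the $L$-invariant subgroups of $N$, so primitivity is equivalent to $L$ acting irreducibly on $\F_p^a$. This gives (iii).
\end{itemize}

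\textbf{Main obstacle.} The only step needing care is the bookkeeping of exactly which O'Nan--Scott formulation is cited. Some versions split product type and compound diagonal, or state holomorph types separately. For each such variant one has to check that the degree is either a power of $|T|$ (excluded by the Burnside argument) or of the form $k^\ell$ with a product-action embedding (landing in (ii)). The Burnside prime-count argument itself is routine.
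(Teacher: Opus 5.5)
Your proposal is correct and takes the same route as the paper. The paper simply specializes the O'Nan--Scott Theorem and drops the diagonal case because its degree $|T|^{k-1}$ is never a prime power; you additionally rule out the twisted wreath (and holomorph/compound diagonal) types, which the paper's one-line remark does not address. A small simplification: you do not need Burnside's $p^\alpha q^\beta$ theorem, only that $|T|$ is not a prime power, which holds because a nontrivial $p$-group has nontrivial centre.
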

	
	A few remarks are in order. First, the so-called ``diagonal action'' is omitted since permutation groups of this kind never act on sets of prime power cardinality. Also, perhaps surprisingly, the handling of the almost simple case happens to be the easiest as it turns out that subgroups of prime power index are rather rare in simple groups. On the other hand, handling the product action will prove to be somewhat more delicate, and the majority of our effort is concentrated on analyzing the regular subgroups of the primitive wreath product $S_{p^2} \wr S_2$. As for the affine case, although the primitive groups of affine type possess a unique regular normal subgroup, the group $\AGL(n,p)$ tends to have numerous other regular subgroups. However, these regular subgroups will always have exponent $p$ when $n < p$, and we will have already shown that for $p \geq 5$, the two nonabelian groups of order $p^4$ and exponent $p$ are non-$B$-groups through handling the product action. Thus, our handling of the affine case will be relatively short.
	
	In the course of our analysis, we will be able to prove the following result, which serves as a weaker version of Li's result for $p$-groups.
	\begin{thmB}
		Let $H$ be a group of order $p^q$ for primes $p$ and $q$ where $q < p$ and $\exp(H) \geq p^2$. Then, $H$ is a $B$-group. 
	\end{thmB}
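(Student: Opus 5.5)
Suppose $G \leq S^\Omega$ is primitive with a regular subgroup $H$, where $|\Omega| = p^q$, $q<p$ and $\exp(H)\geq p^2$. We show $G$ is doubly transitive by going through the three cases of the O'Nan--Scott reduction stated above.

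\textbf{Product action.} If $G \leq S_k \wr S_\ell$ with $p^q = k^\ell$ and $\ell \geq 2$, then $k = p^{q/\ell}$. Since $q$ is prime, this forces $\ell = q$ and $k = p$, so $G \leq S_p \wr S_q$. A Sylow $p$-subgroup of $S_p \wr S_q$ has order $p^q$, because $q<p$ means $p \nmid q!$. Hence $H$ is a Sylow $p$-subgroup of $S_p\wr S_q$, namely $C_p^q$, which has exponent $p$. This contradicts $\exp(H) \ge p^2$, so this case cannot occur.

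\textbf{Affine case.} Here $G = C_p^q \rtimes L$ with $L \leq \GL(q,p)$. A Sylow $p$-subgroup of $\AGL(q,p)$ is $C_p^q \rtimes U$, where $U \cong H_q(p)$ is the unitriangular group. Any $p$-element of $\AGL(q,p)$ acts on $\F_p^q$ as $v \mapsto Av+b$ with $A$ unipotent. Its $p$-th power is $v\mapsto A^p v + (1+A+\cdots+A^{p-1})b$. Write $A = 1+N$ with $N^q = 0$. Since $q<p$, we get $A^p = 1$, and
\[
\sum_{i=0}^{p-1} A^i = \sum_{j} \binom{p}{j+1} N^j .
\]
Every coefficient $\binom{p}{j+1}$ with $j+1<p$ vanishes mod $p$, and the terms with $j \geq q$ vanish because $N^q=0$; as $q<p$, this covers all $j \le p-1$. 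So every $p$-element of $\AGL(q,p)$ has order $p$. Since $H$ is a $p$-group, it would have exponent $p$, again a contradiction. This is the step needing the most care, but the binomial computation settles it.

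\textbf{Almost simple case.} Let $T = \Soc(G)$. Since $H$ is transitive of $p$-power degree and $G = G_\alpha H$, the stabiliser $G_\alpha$ has index $p^q$ in $G$. Then $T$ is transitive, because a nontrivial normal subgroup of a primitive group is transitive, so $T_\alpha$ has index $p^q$ in $T$. We now invoke Guralnick's classification of prime-power-index subgroups of simple groups, which the paper presumably cites. It leaves the following possibilities:
\begin{itemize}
\item $T = A_{p^q}$ in its natural action. Then $G$ is doubly transitive, and we are done.
\item $T = \mathrm{PSL}(n,r)$ acting on points or hyperplanes, with $(r^n-1)/(r-1) = p^q$. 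This action is always doubly transitive.
\item $T=\mathrm{PSL}(2,11)$ of degree $11$, $M_{23}$ of degree $23$, $M_{11}$ of degree $11$, or $\PSU(4,2)$ of degree $27$. The first three have prime degree, so they are excluded since $q\geq 2$. For $\PSU(4,2)$, the degree is $27 = 3^3$ with $q = 3 = p$, which violates $q<p$.
\end{itemize}
Thus in every surviving case $G$ is doubly transitive, and $H$ is a $B$-group.

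The main risk is making sure this last list is complete. We rely on Guralnick's theorem exactly as stated, together with the condition $q<p$, to exclude the one uniprimitive candidate of prime-power degree.
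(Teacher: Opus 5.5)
Your proof is correct and follows essentially the same route as the paper: the three-way O'Nan--Scott split, the observation that $p \nmid q!$ forces any regular subgroup of $S_p \wr S_q$ to be a Sylow subgroup $C_p^q$ (Lemma 4.1), and the same binomial-sum computation showing that every $p$-element of $\AGL(q,p)$ has order dividing $p$ (Proposition 5.1). The only difference is cosmetic and lies in the almost simple case: you read off Guralnick's full list and lift double transitivity directly from the transitive socle, whereas the paper quotes Guralnick's corollary (only $\PSU(4,2)$ on $27=3^3$ points fails to be $2$-transitive) and passes from the socle to $G$ via the coset bijection in Proposition 3.2.
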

	Notice that our assumptions above imply that $H$ is not $m$-factorizable for any $m \geq 2$. The assumption that $q < p$ cannot be relaxed either. For instance, if $p = q = 3$, then the nonabelian group of order $27$ and exponent $9$ appears as a regular subgroup of $\PSU(4,2)$ in its uniprimitive action on 27 points. 
	
	This work is organized as follows. In section 2, we will review some of the known results on $B$-groups that will prove useful to us. In sections 3, 4, and 5, we will respectively handle the almost simple case, product action case, and affine case. As many of our arguments only apply for primes greater than 3, we separately handle the primes 2 and 3 in section 6. Finally, in the last section, we will use the results collected in the previous sections to prove Theorems A and B.

	\section{Preliminaries}
	
	We establish some important results which are either important for historical context or will prove useful in the foregoing arguments. 
	
	\subsection{Some classical results on $B$-groups}
	
	We first state Burnside's result from \cite[\S252]{Burn}:
	
	\begin{theorem}\label{Burnside-cyclic}
		Let $G$ be a permutation group acting uniprimitively on a set $\Omega$ with $|\Omega| = p^a$ for some prime $p$ and an integer $a$. Further, assume that $G$ possesses a cyclic, regular subgroup $H$. Then, $|H| = p$.
	\end{theorem}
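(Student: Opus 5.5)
The plan is to use Schur's method of $S$-rings, the classical route to this theorem. Suppose, for a contradiction, that $|H| = p^a$ with $a \geq 2$, and write $H = \la h \ra$. Since $H$ is regular, we identify $\Omega$ with $H$ so that $1 \in \Omega$ corresponds to the identity. For each orbit $\Delta$ of the stabilizer $G_1$, let $T_\Delta$ be the corresponding subset of $H$, viewed as the element $\sum_{t \in T_\Delta} t$ of $\Z[H]$. The $\Z$-span $\mathfrak{S}$ of these basic quantities is the transitivity module. It is a subring of $\Z[H]$ closed under multiplication, and its basic sets partition $H$ with $\{1\}$ as one of them.

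Two standard facts will be proved first.
\begin{itemize}
\item[(a)] \emph{Block criterion.} If $K$ is a subgroup of $H$ whose sum $\underline{K}$ lies in $\mathfrak{S}$, then the cosets of $K$ form a $G$-invariant partition of $\Omega$. Hence primitivity forces $K = 1$ or $K = H$.
\item[(b)] \emph{Multiplier and Frobenius properties.} For any $X = \sum x_g g \in \mathfrak{S}$ and any $m$ coprime to $p$, the element $X^{(m)} = \sum x_g g^m$ also lies in $\mathfrak{S}$. Moreover $X^p \equiv X^{(p)} \pmod p$ in $\Z[H]$.
\end{itemize}
The first claim in (b) comes from the Galois action on $\mathbb{Q}(\zeta_{p^a})$ applied to the characters of $H$, using that $\mathfrak{S} \otimes \mathbb{Q}$ is the centralizer algebra of the rational $G_1$-action. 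The second is the usual freshman's-dream congruence in the commutative ring $\Z[H]$. A consequence of the multiplier property is that every basic set is a union of sets of the form $\{g^m : (m,p) = 1\}$. In other words, each basic set is a union of ``layers'' $L_j$, where $L_j$ is the set of elements of order $p^j$.

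Next we look for a proper nontrivial subgroup whose sum lies in $\mathfrak{S}$, in order to contradict (a). Because $G$ is not doubly transitive, there are at least two nontrivial basic sets. If some nontrivial basic set $T$ is contained in $H^p$, we let $K$ be the subgroup of $H^p$ generated by all basic sets contained in $H^p$. Since $H$ is cyclic, the set of elements of $H^p$ lying in basic sets inside $H^p$ is closed under products, because layer unions within $H^p$ generate a subgroup. Hence $\underline{K} \in \mathfrak{S}$, and $1 < K \leq H^p < H$, contradicting (a).

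Otherwise every nontrivial basic set meets the top layer $L_a$ (the generators of $H$). Since each basic set is a union of layers, the generators are then split among at least two basic sets $T_1, T_2$, and each $T_i$ contains all elements of some lower layers as well. Here I would use (b) in the form $T_i^p \equiv T_i^{(p)} \pmod p$, together with an explicit computation of $T_i^{(p)}$. The map $g \mapsto g^p$ sends $L_j$ onto $L_{j-1}$ and is $p$-to-$1$ for $j \geq 2$, but it is $(p-1)$-to-$1$ from $L_1$ onto $\{1\}$. Reducing modulo $p$ and keeping the coefficients not divisible by $p$ (this again gives an element of $\mathfrak{S}$, since the coefficients of an element of $\mathfrak{S}$ are constant on basic sets), I would extract an element of $\mathfrak{S}$ whose support is a nonempty union of layers inside $H^p$. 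Alternatively, I would compare the coefficient of $1$ in $T_1 T_2^{(-1)}$ with the layer structure. Either way, the result would be a nontrivial basic set contained in $H^p$, reducing to the previous case.

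I expect this last step to be the main obstacle. Because of the $p$-to-$1$ collapse of layers, the naive choice $T^{(p)}$ can vanish modulo $p$, so the bookkeeping of coefficients modulo $p$ (and possibly modulo $p^2$) must be done carefully layer by layer. This step is also exactly where the hypothesis $a \geq 2$ is used: for $a = 1$ there is only one nontrivial layer and no contradiction arises, consistent with the uniprimitive examples of degree $p$.
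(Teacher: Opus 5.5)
The paper gives no proof of this statement. It cites Burnside, remarks that his argument is flawed, and refers to Knapp and Wildon for correct proofs. So your proposal has to stand on its own, and as written it has a genuine gap.

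\textbf{The layer claim is false.} Schur's multiplier theorem only says that $X\mapsto X^{(m)}$ \emph{permutes} the basic sets. It does not say that it fixes each one. So the claim that every basic set is a union of layers $L_j$ does not follow, and it is false in general. For $G=C_7\rtimes C_3$ acting on $\F_7$, the basic sets are $\{1,2,4\}$ and $\{3,5,6\}$. The multiplier $m=3$ interchanges them, and neither is a union of layers. Your derivation never uses $a\geq 2$ or primitivity, so with $a=1$ it would make every group of prime degree containing a $p$-cycle doubly transitive.

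The claim also makes the proposal inconsistent. If it held, $L_a$ would lie in a single basic set, and your ``otherwise'' case (generators split between $T_1$ and $T_2$) could never arise. The theorem would then follow immediately from your first case. In effect, the whole content of the theorem has been placed inside this unjustified step.

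A smaller point concerns your first case. The closure-under-products argument is wrong as stated. The correct tool is that $\langle T\rangle$ is an $\mathfrak{S}$-subgroup for any $\mathfrak{S}$-set $T$, since it is the support of $(1+\underline{T})^N$ for large $N$.

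\textbf{The decisive step is missing, and your proposed tool cannot supply it.} Done correctly, without the layer claim, the Frobenius congruence gives the following. Let $P$ be the subgroup of order $p$. The set of $y$ with $|\{t\in T: t^p=y\}|\not\equiv 0 \pmod p$ is an $\mathfrak{S}$-set inside $H^p$, so primitivity forces it into $\{1\}$. Hence $T\setminus P$ is a union of cosets of $P$. Since $\{h : hT=T\}$ is an $\mathfrak{S}$-subgroup, it follows that every nontrivial basic set meets $P$.

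That is all the congruence can give. Elements of the form ``supported on $P$ plus $P$-invariant'' form a subring containing $\mathfrak{S}$. For such elements, $X^{(p)}$ is congruent modulo $p$ to a multiple of $1$. So no further use of the congruence can rule out $P\setminus\{1\}$ being split among several basic sets. Neither can the coefficient of $1$ in $T_1T_2^{(-1)}$, which is just $|T_1\cap T_2|=0$.

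A new ingredient is needed, for instance integrality of character values:
\begin{itemize}
\item[(a)] Suppose a faithful character $\lambda$ and a character $\psi$ trivial on $P$ agree on all basic quantities. Then $\sum_{c\in T\cap P}(\lambda(c)-1)\in p\Z[\zeta_p]$, which forces $T\cap P\in\{\emptyset,P\setminus\{1\}\}$. Since every nontrivial basic set meets $P$, there is then only one nontrivial basic set, so $G$ is doubly transitive.
\item[(b)] If no such pair exists, the faithful characters form a union of classes. Then $\underline{P}\in\mathfrak{S}\otimes\mathbb{Q}$, contradicting primitivity.
\end{itemize}
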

	In particular, this implies that cyclic groups of order $p^a$ with $a > 1$ are $B$-groups. Interestingly, Burnside's argument in \cite[\S252]{Burn} contains a significant error. However, there has since appeared numerous alternative proofs of this result. See \cite{Knapp} or \cite{Wild2} for a more detailed discussion.
	
	On the other hand, if a primitive permutation group contains a regular subgroup of order $p$, Burnside provides an elegant character-theoretic proof for the fact that any such group is necessarily solvable in \cite[\S251]{Burn}.
	
	Although Theorem \ref{Li-Corollary} relies upon the classification of finite simple groups, the ``if'' direction can be seen in a straightforward manner by considering the product action of wreath products. In fact, this claim remains true under a much milder assumption on the structure of $G$. The proof provided is essentially that of Wielandt's Theorem 25.7 in \cite{W}.
	
	\begin{proposition}\label{W25-7}
		Let $m \geq 2$ be an integer, and let $H$ be an $m$-factorizable group that is not isomorphic to an elementary abelian 2-group of prime rank. Then, $H$ is not a $B$-group. 
	\end{proposition}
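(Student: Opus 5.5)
We realize $H$ as a regular subgroup of a primitive wreath product in product action that is not doubly transitive. Write $H \cong H_1 \times \cdots \times H_k$ with $k \geq 2$ and $|H_i| = m$ for all $i$. Set $\Delta = \{1,\dots,m\}$ and $\Omega = \Delta^k$, and let $W = S_m \wr S_k$ act on $\Omega$ in the product action. Choose a regular embedding $H_i \hookrightarrow S_m$ for each $i$, for instance the right regular representation. Then the base group $S_m^k$ contains $H_1 \times \cdots \times H_k$ acting coordinatewise, and this subgroup is regular on $\Omega$: it is transitive coordinatewise and has order $m^k = |\Omega|$. So $H$ embeds as a regular subgroup of $W$, and it remains to show that $W$ (or a suitable subgroup containing $H$) is primitive but not doubly transitive.

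Primitivity comes from the standard criterion for the product action: $S_m \wr S_k$ is primitive on $\Delta^k$ exactly when $S_m$ is primitive and not regular on $\Delta$, and $S_k$ is transitive. $S_k$ is transitive on the coordinates. $S_m$ is primitive on $\Delta$, and it is not regular once $m \geq 3$. I would either cite Wielandt or check the criterion directly. For the direct check, a block system $\mathcal{B}$ invariant under $W$ is invariant under the base group $S_m^k$. The point stabilizer of $S_m^k$ is $S_{m-1}^k$, and its overgroups are the products $\prod_i X_i$ with $X_i \in \{S_{m-1}, S_m\}$, since $S_{m-1}$ is maximal in $S_m$ and subdirect-product arguments apply. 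Transitivity of the top group $S_k$ then forces all $X_i$ to be equal, so the only blocks are trivial.

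Non-double-transitivity is easy. The Hamming distance between points of $\Omega$ is preserved by $W$. For $m \geq 2$ and $k \geq 2$ there are pairs of distinct points at distance $1$ and pairs at distance $k \geq 2$, so $W$ is not $2$-transitive. Hence $W$ is uniprimitive whenever $m \geq 3$, and $H$ is not a $B$-group in that case.

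The main obstacle is $m = 2$, where $S_2$ is regular on $\Delta$ and the product action of $S_2 \wr S_k$ on the $k$-cube is imprimitive (antipodal pairs form blocks). Here $H \cong C_2^k$ is elementary abelian. By hypothesis $k$ is not prime, so we can write $k = ab$ with $a, b \geq 2$ and regroup $H \cong (C_2^a)^b$. This makes $H$ an $m'$-factorizable group with $m' = 2^a \geq 4$, and the $m \geq 3$ case applies with $b \geq 2$ factors. This is exactly where the prime-rank exclusion is needed. Indeed, for $k = 2$ the group $C_2^2$ is a $B$-group, as noted in the introduction.
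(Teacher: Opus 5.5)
Your proposal is correct and follows the same route as the paper. You embed $H$ coordinatewise as a regular subgroup of the base group of $S_m \wr S_k$ in product action, use primitivity for $m \geq 3$, $k \geq 2$ (Dixon--Mortimer, Lemma 2.7A) together with the failure of double transitivity, and reduce $m=2$ to $m \geq 3$ by regrouping $C_2^k$ as $(C_2^a)^b$ with $k=ab$ composite, which is exactly the paper's argument with the Hamming-distance and regrouping details spelled out.
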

	\begin{proof}
		Write $H$ as a direct product, say $H = \prod_{i=1}^k H_i$ for $k \geq 2$, and assume that each $|H_i|$ has equal order, say $m$. We may assume that a decomposition can be chosen so that $m \geq 3$ except in the case where $H$ is an elementary abelian 2-group of prime rank. 
		
		Let $\Omega$ be a set with cardinality $m$. Each $H_i$ can be embedded as a regular subgroup of the symmetric group $S^\Omega$, and we can thereby embed $H$ as a regular subgroup of the $k$-fold direct product of $S^\Omega$ which acts naturally on the $k$-fold direct product $\Omega^k$. In particular, $H$ is a regular subgroup of the wreath product $S^\Omega \wr S_k$. This action is guaranteed to be primitive if $m \geq 3$ and $k \geq 2$ (see \cite[Lemma 2.7A]{DM} for instance). On the other hand, $S^\Omega \wr S_k$ in its product action is easily seen to not be doubly transitive.
	\end{proof}
	It is interesting to note that there is yet no complete criterion for an elementary abelian 2-group to be a $B$-group, and indeed, the answer depends upon the rank. It is known that an elementary abelian 2-group of rank $a$ is not a $B$-group whenever $2^a - 1$ is composite (cf. \cite{Wild2}).
	
	We now state some miscellaneous classical results on $B$-groups.
	\begin{theorem} \label{Kochs-thm}
		\hspace{1cm}
	\begin{itemize}
		\item[(i)] [Kochend\"orffer, 1937]	Let $p$ be a prime, and let $a$ and $b$ be distinct positive integers. The group $C_{p^a} \times C_{p^b}$ is a $B$-group. \cite{Koch}
		\item[(ii)] [Wielandt, 1950] Let $n \geq 3$ be an integer. The dihedral group $D_{2n}$ is a $B$-group. \cite{W2}
		\item[(iii)] [Scott, 1957] The generalized dihedral groups are $B$-groups. \cite{Scott}
	\end{itemize}
	\end{theorem}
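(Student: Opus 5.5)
These are classical results quoted from the literature, and in the paper they are simply cited: (i) from \cite{Koch}, (ii) from \cite{W2}, (iii) from \cite{Scott}. If I had to reprove them, I would use Schur's method of $S$-rings in the form developed in \cite[\S\S23--25]{W}. The plan below is an outline; I have not checked the steps in detail.

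\textbf{Setup.} Let $G \le S^\Omega$ be primitive with a regular subgroup $H$, identify $\Omega$ with $H$, and let $G_1$ be the stabilizer of the point $1$. The orbits of $G_1$ on $H$ determine basic sets $T_0=\{1\}, T_1, \dots, T_r$. Their simple quantities $\underline{T_i}$ span a Schur ring $\mathfrak S$ over $H$, with the following properties:
\begin{itemize}
\item[(a)] $\mathfrak S$ is closed under multiplication in $\mathbb{Z}H$ and under $T \mapsto T^{-1}$;
\item[(b)] primitivity of $G$ is equivalent to $\mathfrak S$ having no $\mathfrak S$-subgroup other than $1$ and $H$;
\item[(c)] double transitivity is equivalent to $r=1$.
\end{itemize}
So in each case the goal is this: assuming $r \ge 2$, produce a nontrivial proper subgroup of $H$ that is a union of basic sets.

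\textbf{Part (i).} Take $H = C_{p^a}\times C_{p^b}$ with $a<b$. Here I would use the two multiplier theorems available for abelian $H$:
\begin{itemize}
\item[(1)] $x\mapsto x^m$ permutes the basic sets whenever $(m,|H|)=1$;
\item[(2)] for the prime $p$, the set $T^{(p)}=\{t^p : t\in T\}$ is again a union of basic sets (Wielandt's Theorem 23.9).
\end{itemize}
Iterating (2) gives the following: for each $k$, the image $H^{p^k}$ of the $p^k$-power map, and the $p^k$-torsion subgroup $\Omega_k(H)$, are both $\mathfrak S$-subgroups. With $b>a$, take $k=a$. Then $H^{p^a}$ is cyclic of order $p^{b-a}$ and lies strictly between $1$ and $H$. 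This already contradicts primitivity by (b), so $r \ge 2$ is impossible and $G$ is doubly transitive. The delicate point is verifying that the power sets really are $\mathfrak S$-subgroups, i.e.\ unions of basic sets and not merely sets containing them. This is the usual counting argument modulo $p$ on coefficients of $\underline T^{\,p}$, and I expect it to be the main technical step.

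\textbf{Parts (ii) and (iii).} Let $H = A \rtimes \langle t\rangle$ with $A$ abelian and $t$ inverting $A$, which covers $D_{2n}$. The multiplier theorems no longer apply directly because $H$ is nonabelian, so I would work with the index-two abelian subgroup $A$. Every element of $H\setminus A$ is an involution. Since $\mathfrak S$ is closed under $T \mapsto T^{-1}$, the strategy is to show that each basic set lies entirely in $A$ or entirely in $H\setminus A$. Given that, $A$ is a union of basic sets, hence an $\mathfrak S$-subgroup, contradicting primitivity unless $|A|=1$; that case is excluded since $n \ge 3$.

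To obtain the splitting, I would compare $\underline T\,\underline{T^{-1}}$ with $\underline T^{\,2}$. For $T$ meeting $H\setminus A$, the elements of $T \cap (H\setminus A)$ are involutions and so equal their inverses. I would also use that conjugation by $t$ induces an automorphism of $H$ preserving $\mathfrak S$; for Scott's generalized dihedral case this should be verified via the $G_1$-orbit structure. The step I expect to be hardest is ruling out a basic set that straddles $A$ and $H\setminus A$. For this I would try a character-theoretic argument in the style of \cite[\S25]{W}: restrict to the linear characters trivial on $A$, and show that a straddling set would force an $\mathfrak S$-subgroup of index $2$ or a proper subgroup of $A$, either of which contradicts primitivity.
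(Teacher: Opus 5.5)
The paper gives no proof of this theorem; it only quotes Kochend\"orffer, Wielandt and Scott. So your outline has to stand on its own, and it has genuine gaps. The gap in part (i) is fatal.

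\textbf{Part (i).} Claim (2) is false as stated, and the argument built on it proves too much: it never uses $r\ge 2$ or $a\ne b$.
\begin{itemize}
\item[(1)] Take $G=S^H$. It is doubly transitive and contains $H$ regularly. Its basic sets are $\{1\}$ and $T=H\setminus\{1\}$. Since $b\ge 2$, $T^{(p)}=H^p$ is a nontrivial proper subgroup, so it is not a union of basic sets.
\item[(2)] For $H=C_{p^2}\times C_{p^2}$, your reasoning with $k=1$ would make $H^p$ an $\mathfrak S$-subgroup for the S-ring of the uniprimitive group $S_{p^2}\wr S_2$. Then $H$ would be a $B$-group, contradicting Proposition~\ref{W25-7}.
\end{itemize}
What the Frobenius computation actually gives, for abelian $H$, is $\underline T^{\,p}\equiv\sum_{t\in T}t^p \pmod p$, with the right side counted with multiplicity. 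Hence the $\mathfrak S$-sets you obtain are only the sets $\{x : |\{t\in T : t^p=x\}|\equiv c \pmod p\}$ with $c\not\equiv 0$. These coincide with $T^{(p)}$ only when $t\mapsto t^p$ is injective on $T$. In a primitive $\mathfrak S$ they lie in the proper subgroup $H^p$, and the subgroup they generate is an $\mathfrak S$-subgroup, so they lie in $\{1\}$. What you get is a divisibility condition on numbers of $p$-th roots, not a new $\mathfrak S$-subgroup. Turning such conditions, together with $r\ge 2$ and $a\ne b$, into a contradiction is the actual content of Kochend\"orffer's theorem, and it is missing.

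\textbf{Parts (ii) and (iii).} Everything rests on the splitting claim, which you leave unproved. It can only come from $r\ge 2$, because in the doubly transitive S-ring the basic set $H\setminus\{1\}$ straddles $A$ and $H\setminus A$. Worse, for the class in your set-up the claim is false even in the uniprimitive case.
\begin{itemize}
\item[(1)] Let $H=D_8\times C_2=\la r,s\ra\times\la z\ra$. This is $A\rtimes\la s\ra$ with $A=\la r,z\ra\cong C_4\times C_2$ inverted by $s$.
\item[(2)] Put $K_1=\la r\ra$, which is normal, and $K_2=\la s,z\ra$, whose only conjugates are $K_2$ and $\la r^2s,z\ra$. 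All conjugates of $K_1$ and $K_2$ meet trivially.
\item[(3)] The construction in the converse half of Lemma~\ref{fund_prod_lem}, which does not use $p\ne 2$, therefore makes $H$ regular in $S_4\times S_4\le S_4\wr S_2$. This group is uniprimitive of rank $3$, as in Proposition~\ref{W25-7}.
\item[(4)] The basic sets have sizes $1,6,9$, so no union of them has size $8=|A|$. Concretely, the distance-one basic set $\{r,r^2,r^3,z,s,sz\}$ meets both $A$ and $H\setminus A$.
\end{itemize}
So $D_8\times C_2$ is not a $B$-group, consistent with Lemma~\ref{order16}. Statement (iii) can therefore only hold under some restriction on $A$, or a narrower meaning of ``generalized dihedral'', that the one-line paraphrase omits. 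A correct argument must use some property of $A$ that fails for $C_4\times C_2$; for (ii) this is presumably the cyclicity of $A$. None of the tools you list (comparing $\underline T\,\underline{T^{-1}}$ with $\underline T^{\,2}$, conjugation by $t$, an unspecified character argument) shows where $r\ge 2$ or such a property would enter. The decisive step is therefore absent, not merely hard.
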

	
	Notice that groups of the form $C_{p^a} \times C_{p^b}$ with $a \neq b$ are not uniformly decomposable. Thus, Kochend\"orffer's result leads us to wonder whether Theorem \ref{Li-Corollary} at least somewhat generalizes to abelian $p$-groups where $p$ is odd. 
	
	\begin{question}
		Let $H$ be a nonsimple abelian $p$-group for a prime $p$, and assume that $H$ is not uniformly decomposable. Then, is $H$ a $B$-group?
	\end{question}

\subsection{Groups of order $p^3$}

The classification of $B$-groups of order $p^3$ was essentially completed by G. Jones in \cite{Jones} with the aid of the above-mentioned and highly nontrivial results of Burnside, Kochend\"orffer, and Wielandt. 

\begin{theorem}\label{BKWJ}
	Let $H$ be a group of order $p^3$ for some prime $p$. Then $H$ is a $B$-group iff one of the following holds:
	\begin{itemize}
		\item[(i)] $p = 2$.
		\item[(ii)] $p \geq 5$, and $H$ is isomorphic to $C_9 \rtimes C_3$.
		\item[(iii)] $p$ is any prime, and $H$ is isomorphic to $C_{p^3}$ or $C_{p^2} \times C_p$.
	\end{itemize}
\end{theorem}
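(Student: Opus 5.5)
The statement is a compilation of known results, so I would prove it by running through the five isomorphism classes of groups of order $p^3$ and quoting, for each, the result that decides whether it is a $B$-group. The classes are $C_{p^3}$, $C_{p^2}\times C_p$, $C_p^3$, the nonabelian group of exponent $p$ (for $p=2$ these last two are replaced by $D_8$ and $Q_8$), and the nonabelian group $C_{p^2}\rtimes C_p$ of exponent $p^2$. I read item (ii) as referring to this last group for general $p\geq 5$; the notation $C_9\rtimes C_3$ there is its $p=3$ instance.

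\textbf{Abelian classes.}
\begin{itemize}
\item $C_{p^3}$ is a $B$-group for every $p$ by Theorem \ref{Burnside-cyclic}, since a cyclic regular subgroup of a uniprimitive group of degree $p^3$ would have to have order $p$.
\item $C_{p^2}\times C_p$ is a $B$-group for every $p$ by Theorem \ref{Kochs-thm}(i) with $a=2$ and $b=1$.
\item For odd $p$, $C_p^3$ is $p$-factorizable with $p\geq 3$ and is not an elementary abelian $2$-group. Proposition \ref{W25-7} therefore places it regularly in the uniprimitive group $S_p\wr S_3$ in product action, so it is not a $B$-group. This matches the theorem, which does not list $C_p^3$ for odd $p$.
\end{itemize}

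\textbf{The case $p=2$.} Here I would not treat the five groups separately, but show that every primitive group of degree $8$ is doubly transitive. From the list of primitive groups of degree $8$, namely $A\Gamma L(1,8)$ and its subgroup $AGL(1,8)$, $AGL(3,2)$, $PSL(2,7)$, $PGL(2,7)$, $A_8$ and $S_8$, each is $2$-transitive. For instance, $AGL(1,8)$ is sharply $2$-transitive, and $PSL(2,7)$ is $2$-transitive on the $8$ points of the projective line. Hence there are no uniprimitive groups of degree $8$, and every group of order $8$ is a $B$-group. This gives (i), and in particular covers $C_2^3$, which Proposition \ref{W25-7} deliberately excludes.

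\textbf{Nonabelian classes for odd $p$.} This is the step I expect to be the main obstacle, because it genuinely requires Jones's work \cite{Jones} rather than the general results stated so far.
\begin{itemize}
\item The exponent-$p$ group $H_3(p)$ is never a $B$-group. I would either cite Jones or exhibit an affine example directly. For the latter, choose an irreducible subgroup $L\leq \GL(3,p)$ that contains enough unipotent elements for $C_p^3\rtimes L$ to have a regular subgroup isomorphic to $H_3(p)$, but that is not transitive on nonzero vectors. Then $C_p^3\rtimes L$ is primitive but not $2$-transitive.
\item The exponent-$p^2$ group for $p=3$ is not a $B$-group, because it is regular in $\PSU(4,2)$ acting uniprimitively on $27$ points, as noted after Theorem B.
\item For $p\geq 5$ the exponent-$p^2$ group is a $B$-group. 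Here I would rely on Jones, whose argument runs through the O'Nan--Scott cases:
\begin{itemize}
\item there is no nontrivial product action of degree $p^3$ other than $S_p\wr S_3$, whose regular subgroups are contained in $C_p^3$-type structures of exponent $p$;
\item in the affine case, $p>3$ forces unipotent elements of $\GL(3,p)$ to have order $p$, so all regular subgroups have exponent $p$;
\item almost simple groups with a subgroup of index $p^3$ and $p\geq5$ are only $A_{p^3}$, $S_{p^3}$ or $2$-transitive projective groups.
\end{itemize}
\end{itemize}

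Putting the classes together yields exactly the three alternatives (i), (ii) and (iii) of the theorem.
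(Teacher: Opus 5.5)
Your case-by-case compilation matches the paper's treatment: Burnside, Kochend\"orffer, Proposition~\ref{W25-7}, the absence of uniprimitive groups of degree $8$, $\PSU(4,2)$ at $p=3$, and Jones for the rest. Reading (ii) as $C_{p^2}\rtimes C_p$ is also right. If you rely on the citation of Jones for $H_3(p)$, nothing is lost.

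Your self-contained affine alternative for $H_3(p)$, however, does not hold up. You never name $L$, and at $p=3$ no such $L$ exists. Suppose $g\colon \mathbf{w}\mapsto \mathbf{w}u+v$ lies in an exponent-$3$ regular subgroup of $\AGL(3,3)$ with $u\neq 1$. Then $g^3$ is translation by $v(1+u+u^2)=v(u-1)^2$, while $g$ is fixed-point-free iff $v\notin\mathrm{Im}(u-1)$. When $u$ is a single Jordan block, $\mathrm{Im}(u-1)=\ker(u-1)^2$, so $g$ would have order $9$. Hence $L$ must contain a transvection. But an irreducible subgroup of $\GL(3,3)$ containing a transvection contains $\mathrm{SL}(3,3)$: proper irreducible subgroups of $\mathrm{SL}(3,3)$ lie in conjugates of $13{:}3$ or $S_4\cong\SO(3,3)$, whose elements of order $3$ are single Jordan blocks. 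Such an $L$ is transitive on nonzero vectors, so $C_3^3\rtimes L$ is $2$-transitive. This is why the paper's affine witness is stated only for $p\geq 5$. That witness is $L=\SO(3,p)$ together with the displayed subgroup of $\AGL(3,p)$; at $p=3$ the same construction yields $C_9\rtimes C_3$. It is also why $H_3(3)$ has to come from $\PSU(4,2)$ via Lemma~\ref{reg-of-PSU42}, which you invoke only for the exponent-$9$ group.

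The affine bullet in your $p\geq 5$ sketch has the right conclusion for the wrong reason. Unipotent elements of $\GL(3,3)$ already have order $3$, yet $\AGL(3,3)$ contains elements of order $9$, as shown above. What is needed is $1+A+\dots+A^{p-1}=(A-I)^{p-1}=0$ for every unipotent $A\in\GL(3,p)$, which holds exactly when $3\le p-1$. Equivalently, unipotent elements of $\GL(4,p)\supseteq\AGL(3,p)$ have order $p$; this is Proposition 5.1 with $n=3<p$.

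With that correction, your three O'Nan--Scott bullets are precisely the paper's proof of Theorem B (Proposition 3.2, Lemma 4.1, Proposition 5.1), which does not depend on the present statement. Citing Theorem B with $q=3$ therefore gives (ii), and also (iii) for $p\geq 5$, without appealing to Jones.
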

A few remarks are in order. First, the fact that no group of order 8 is a $B$-group is in particular a consequence of that fact that there are no uniprimitive groups acting with degree 8 (see Table 1 in \cite{Wild2} for instance). Also, the two nonabelian groups of order 27 are seen to be regular subgroups of $\PSU(4,2)$ in its uniprimitive action of degree 27 and thus are not $B$-groups (see Lemma 3.3 below). We lastly remark that the group $H_3(p)$ for $p \geq 5$ is not a $B$-group as it is isomorphic to a regular subgroup of the unirpimitive group $C_p^3 \rtimes \SO(3,p)$ acting on $C_p^3$ viewed as a $\F_p$-vector space. Indeed, consider the following subgroup of $\AGL(3,p)$:
\begin{equation}
	\bigg\la \textbf{x}
	\begin{bmatrix} 
		1 & x & \frac{1}{2}x^2 \\ 
		0 & 1 & x \\
		0 & 0 & 1
	\end{bmatrix} +
	\begin{bmatrix}
		x \\ y \\ z
	\end{bmatrix}^T \bigg| x,y,z \in \F_p \bigg\ra.
\end{equation}
It is routine to verify that this group is isomorphic to $H_3(p)$ and acts regularly on the $\F_p$-vector space of dimension $3$. The claim then follows from constructing a nondegenerate bilinear form that is preserved by the linear component of each group element.
	
	We now use the above results to tabulate the abelian $B$-groups of order $p^4$. We state this as a lemma for future use.
	\begin{lemma}
		Let $H$ be an abelian of order $p^4$ for a prime $p$. $|H|$ is a $B$-group iff $H$ is isomorphic to $C_{p^4}$ or $C_{p^3} \times C_p$.
	\end{lemma}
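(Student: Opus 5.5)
The plan is to run through the five isomorphism types of abelian groups of order $p^4$, namely $C_{p^4}$, $C_{p^3}\times C_p$, $C_{p^2}\times C_{p^2}$, $C_{p^2}\times C_p\times C_p$ and $C_p^4$, and settle each one with results already quoted. The positive direction follows directly from known theorems. $C_{p^4}$ is a $B$-group by Theorem \ref{Burnside-cyclic}, because a cyclic regular subgroup of a uniprimitive group of prime-power degree must have order $p$. $C_{p^3}\times C_p$ is a $B$-group by Kochend\"orffer's result, Theorem \ref{Kochs-thm}(i), applied with $a=3\neq b=1$.

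For the negative direction, the $m$-factorizable types are handled by Proposition \ref{W25-7}. We have $C_{p^2}\times C_{p^2}$ with $m=p^2\geq 4$, and $C_p^4\cong C_{p^2}'\times C_{p^2}'$, where $C_{p^2}'=C_p\times C_p$ and $m=p^2$. Since $p^2\ge 4$, neither group is an elementary abelian $2$-group of \emph{prime} rank; for $C_2^4$ the rank $4$ is not prime. So the proposition applies and both groups are non-$B$-groups. Concretely, $H$ embeds regularly in $S_{p^2}\wr S_2$ in product action, which is primitive because $p^2\geq 3$ but is not $2$-transitive.

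The remaining type, $C_{p^2}\times C_p\times C_p$, is the only one needing thought, since its factorizations into direct factors of equal order fail: the orders $p^2,p,p$ do not regroup into equal pieces, except as $(C_{p^2})\times(C_p\times C_p)$, where both factors have order $p^2$. That regrouping gives an $m$-factorization with $m=p^2$, $k=2$. Proposition \ref{W25-7} then applies, and $H$ is again not a $B$-group. The definition of $m$-factorizable only requires equal orders of the factors, not that they be isomorphic, so this step is legitimate. Indeed, the product-action embedding in Proposition \ref{W25-7} embeds each factor separately as a regular subgroup of $S_{p^2}$, so non-isomorphic factors cause no trouble.

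The main point to check carefully is this last regrouping, together with the exceptional clause of Proposition \ref{W25-7}. For $p=2$, one must confirm that each of $C_4\times C_4$, $C_4\times C_2\times C_2$ and $C_2^4$ decomposes into two factors of order $4\geq 3$, so the primitivity criterion (\cite[Lemma 2.7A]{DM}) holds. Once that is verified, the five cases together give the stated equivalence for every prime $p$.
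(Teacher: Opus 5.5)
Your proposal is correct and follows the paper's proof exactly: Theorem \ref{Burnside-cyclic} for $C_{p^4}$, Theorem \ref{Kochs-thm}(i) for $C_{p^3}\times C_p$, and Proposition \ref{W25-7} for the remaining three types. Your explicit $p^2$-factorizations (in particular $C_{p^2}\times(C_p\times C_p)$) and your check that $p=2$ avoids the exceptional clause simply spell out details the paper leaves implicit.
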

	\begin{proof}
		The fact that cyclic groups of composite prime power order are $B$-groups follows from Theorem \ref{Burnside-cyclic}. The fact that the group $C_{p^3} \times C_{p}$ is a $B$-group follows from Theorem \ref{Kochs-thm}(i). The remaining three isomorphism classes of abelian groups are seen not to be $B$-groups by Proposition \ref{W25-7}.
	\end{proof}
	
	\subsection{Nonabelian groups of order $p^4$}
	
	Now that the abelian groups have been accounted for, we provide in Table 1 presentations for the nonabelian groups of order $p^4$ for $p \geq 5$ as described by Burnside in \cite[\S117]{Burn}. We also provide the exponent and the isomorphism classes of the center and derived subgroup for each group as these will prove useful in our analysis of the product action.

	\begin{table}[htpb]
		\centering
		\caption{Nonabelian groups of order $p^4$ for $p \ge 5$ \cite[\S117]{Burn}. For the provided information about the isomorphism classes of the centers and derived subgroups, see \cite{HA}.}
		\label{tab:burnside_groups}
		\renewcommand{\arraystretch}{1.4}
		\begin{tabular}{@{}| l | p{10.5cm} | c | c | c |@{}}
			\hline
			\textbf{Type} & \textbf{Presentation} & $G'$ & $\Cent(G)$ & $\exp(G)$ \\
			\hline
			\textbf{(vi)}& 
			$\la a,b \mid a^{p^3} = b^p = 1, a^b = a^{p^2 + 1} \ra$ & 
			$C_p$ & $C_{p^2}$ & $p^3$ \\
			\hline
			
			\textbf{(vii)} & 
			$\la a,b,c \mid  a^{p^2} = b^p = c^p = [a,b] = [a,c] = 1, b^c = ba^p\ra$ & 
			$C_p$ & $C_{p^2}$ & $p^2$ \\
			\hline
			
			\textbf{(viii)} & 
			$
			\la a,b\mid a^{p^2} = b^{p^2} = 1, a^b = a^{p + 1}\ra
			$ & 
			$C_p$ & $C_p^2$ & $p^2$ \\
			\hline
			
			\textbf{(ix)} & 
			$\la a,b,c \mid a^{p^2} = b^p = c^p = [a,b] = [b,c] = 1, a^c = a^{p + 1}\ra$ & 
			%\textit{(Direct product of $\langle Q \rangle$ and $\langle P, R \rangle$)}  
			$C_p$ & $C_p^2$ & $p^2$ \\
			\hline
			
			\textbf{(x)} & 
			$\la a,b,c \mid a^{p^2} = b^p = c^p = [a,b] = [b,c] = 1, a^c = ab \ra$ & 
			$C_p$ & $C_p^2$ & $p^2$ \\
			\hline
			
			\makecell{\textbf{(xi)--(xiii)}} & \begin{minipage}[t]{10.5cm}$\la a,b,c \mid a^{p^2} = b^p = c^p = 1, a^b = a^{p + 1}, a^c = ab, b^c = a^{\gamma p}b \ra$ \newline 
			\textit{where $\gamma = 0$ for (xi), $\gamma = 1$ for (xii),} \newline 
			\textit{and $\gamma$ is any non-residue modulo $p$ for (xiii).} \end{minipage} & 
			$C_p^2$ & $C_p$ & $p^2$ \\
			\hline
			
			\textbf{(xiv)} & 
			$\la a,b,c,d \mid a^p = b^p = c^p = d^p = [\la a,b,c,d\ra, \la a,b \ra] =  1, c^d = ca \ra$ & 
			$C_p$ & $C_p^2$ & $p$ \\
			\hline
			
			\textbf{(xv)} & 
			$\la a,b,c,d \mid a^p = b^p = c^p = d^p = [a,\la b,c,d \ra] = [b,c] = 1, b^d = ba, c^d = cb \ra$  & 
			$C_p^2$ & $C_p$ & $p$ \\
			\hline
		\end{tabular}
	\end{table}

\section{Almost Simple Groups}

Let $G$ be a group acting transitively on a set $\Omega$, and choose $\alpha \in \Omega$. Recall that the \emph{rank} of $G$ is the number of orbits of $G_\alpha$ in its action on $\Omega$ including the orbit $\{\alpha\}$. We have the following result of R. Guralnick in \cite[Corollary 2]{Gur}.
\begin{theorem}
	Let $G$ be a finite simple group having a subgroup $H$ of index $p^a$ where $p$ is prime and $a$ is an integer. The action of $G$ on the cosets of $H$ is doubly transitive except in the case where $G \cong \PSU(4,2)$ and $|G:H| = 27$, in which case, the action is uniprimitive of rank 3. 
\end{theorem}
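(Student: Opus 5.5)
The statement is a corollary of Guralnick's classification of subgroups of prime power index in finite simple groups, and I would derive it from that classification, which itself depends on the classification of finite simple groups. The plan has two steps. First, invoke the list. Second, check each entry on it for double transitivity by identifying the coset action with a familiar permutation representation.

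The list (Guralnick's Theorem 1) says that if a nonabelian simple group $G$ has a subgroup $H$ with $|G:H| = p^a$, then one of the following holds:
\begin{itemize}
\item[(a)] $G \cong A_n$ and $H \cong A_{n-1}$, with $n = p^a$;
\item[(b)] $G \cong \mathrm{PSL}(n,q)$ and $H$ is the stabilizer of a point or of a hyperplane of the projective space, with $|G:H| = (q^n-1)/(q-1) = p^a$ (and $n$ prime);
\item[(c)] $G \cong \mathrm{PSL}(2,11)$ and $H \cong A_5$, of index $11$;
\item[(d)] $G \cong M_{23}$ and $H \cong M_{22}$, or $G \cong M_{11}$ and $H \cong M_{10}$, of indices $23$ and $11$;
\item[(e)] $G \cong \PSU(4,2) \cong \mathrm{PSp}(4,3)$ and $H$ has index $27$, namely $H \cong 2^4 \rtimes A_5$.
\end{itemize}
If $G$ is cyclic of order $p$, then $H = 1$; as in the paper, I take "simple" to mean nonabelian here, since $C_p$ acting regularly on $p$ points is not doubly transitive for $p \geq 5$.

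Next I verify double transitivity case by case.
\begin{itemize}
\item In case (a) the action is the natural action of $A_n$ on $n$ points, which is $(n-2)$-transitive.
\item In case (b) the action is on points or hyperplanes of $\mathrm{PG}(n-1,q)$. This is doubly transitive, because any two distinct points span a line and $\mathrm{PSL}(n,q)$ is transitive on ordered pairs of independent vectors up to scalars.
\item In case (c), the exceptional action of $\mathrm{PSL}(2,11)$ on $11$ points, on the cosets of $A_5$, is the classical doubly transitive action coming from the $2$-$(11,5,2)$ biplane.
\item In case (d), $M_{23}$ and $M_{11}$ are $4$-transitive on $23$ and $11$ points respectively.
\end{itemize}
So every case except (e) is doubly transitive. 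For (e), the action of $\mathrm{PSp}(4,3)$ on the $27$ cosets of $2^4 \rtimes A_5$ is the action of $W(E_6)' $ on the $27$ lines of a cubic surface. The point stabilizer has orbit lengths $1, 10, 16$: the lines meeting a given line and the lines skew to it. Hence the action has rank $3$, and it is primitive because $H$ is maximal in $G$.

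The main obstacle is the input list itself. It rests on CFSG together with a case-by-case analysis: for Lie-type groups one uses Zsigmondy primes and the Borel–Tits theorem to force $H$ into a parabolic subgroup, and for sporadic groups one inspects the maximal subgroups. I would not reprove this but cite it, as the paper does. The remaining delicate point is to make sure that, within each family, every subgroup of the given prime power index is accounted for, so that no other conjugacy class of subgroups of that index yields a different action. For $\mathrm{PSL}(n,q)$ there are two classes, points and hyperplanes, interchanged by the inverse-transpose automorphism, so both give doubly transitive actions. For $\PSU(4,2)$, the other subgroups of small index have index $36$, $40$ or $45$, none of which is a prime power.
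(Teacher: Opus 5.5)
Your proposal is correct and follows the paper's route: the paper cites Guralnick's Corollary 2 without proof, and you cite his Theorem 1 and carry out the short case check (the natural actions of $A_n$ and $\mathrm{PSL}(n,q)$, $\mathrm{PSL}(2,11)$ on 11 points, $M_{11}$ and $M_{23}$, and the rank-3 action of $\PSU(4,2)$ on the 27 lines) that turns the list into the corollary. Your remark that ``simple'' must mean nonabelian is a genuine correction to the statement as written, and the cyclic exception already appears for $p = 3$, not only for $p \ge 5$.
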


In particular, the only simple group that can act uniprimitively on any set of prime power cardinality is $\PSU(4,2)$. Using this result, we are able to prove the following result for almost simple groups:

\begin{proposition}
	Let $G$ be an almost simple group which acts faithfully and uniprimitively on a set $\Omega$ where $|\Omega| = p^a$ for some prime $p$ and an integer $a$. If $\alpha \in \Omega$, then $|G:G_\alpha| = 27$, and $\Soc(G) \cong \PSU(4,2)$.
\end{proposition}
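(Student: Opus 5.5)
Let $T = \Soc(G)$, a nonabelian simple group with $T \trianglelefteq G \leq \mathrm{Aut}(T)$. The plan is to reduce to the simple group $T$ and then apply Guralnick's theorem.

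First, since $G$ is primitive on $\Omega$ and $T$ is a nontrivial normal subgroup, $T$ is transitive on $\Omega$. Because $G = TG_\alpha$, we get $|T : T_\alpha| = |G : G_\alpha| = p^a$. Thus $T$ has a subgroup $T_\alpha$ of prime power index, and Guralnick's theorem applies to the action of $T$ on the cosets of $T_\alpha$, which is the action of $T$ on $\Omega$. Two cases arise.

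If $(T, |T:T_\alpha|) = (\PSU(4,2), 27)$, then $|\Omega| = 27$, so $|G:G_\alpha| = 27$ and $\Soc(G) \cong \PSU(4,2)$, as claimed.

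Otherwise Guralnick's theorem says $T$ is doubly transitive on $\Omega$. Since $T \leq G$, the group $G$ is then doubly transitive as well, contradicting uniprimitivity. So this case cannot occur.

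The only step needing care is the first: checking that faithfulness and the almost simple structure let us identify the $T$-action on $\Omega$ with a coset action of the simple group $T$, so that Guralnick's result applies directly. This is the standard fact that a nontrivial normal subgroup of a primitive group is transitive, together with the Frattini-type factorization $G = TG_\alpha$. I expect no real obstacle, since the heavy lifting is all in Guralnick's theorem.
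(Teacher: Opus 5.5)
Your proposal is correct and matches the paper's proof: factor $G = \Soc(G)G_\alpha$ to get $|\Soc(G):\Soc(G)\cap G_\alpha| = p^a$, apply Guralnick's theorem to the socle, and pass double transitivity from $\Soc(G)$ up to $G$. Your remark that a doubly transitive subgroup on $\Omega$ already makes $G$ doubly transitive is a shorter way to do the last step than the paper's explicit coset bijection.
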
	
\begin{proof}
	Let $G$ be an almost simple group, and let $G$ act faithfully and primitively on a set $\Omega$ where $|\Omega| = p^a$. Also, set $S = \Soc(G)$. Clearly, we may assume that $G$ is nonsimple, for otherwise, the statement reduces to the above result of Guralnick. Choose $\alpha \in \Omega$. Since $G$ acts faithfully, $S$ is not a subgroup of $G_\alpha$. Conversely, $G_\alpha$ cannot be a proper subgroup of $S$, since $G_\alpha$ is necessarily maximal by primitivity. Consequently, $SG_\alpha = G$. By the second isomorphism theorem, we have that \newline $|S:S\cap G_\alpha| = p^a$. 
	
	By Guralnick's result above, the action of $S$ on the cosets of $S \cap G_\alpha$ is doubly transitive unless $S \cong \PSU(4,2)$ and $p^a = 27$. Under the latter circumstance, there is nothing to prove, so we can assume that $S$ acts doubly transitively on the cosets of $S \cap G_\alpha$. It now suffices to show that the action of $G$ on the cosets of $G_\alpha/G$ is doubly transitive.
	
	Consider the map $(S \cap G_\alpha)s \longmapsto G_\alpha s$ from $(S \cap G_\alpha)/S$ to $G_\alpha/G$. It is straightforward to verify that this map is bijective and does not depend on a choice of coset representatives. In particular, a set of coset representatives for $S$ over $S \cap G_\alpha$ is a set of coset representatives for $G$ over $G_\alpha$, and for $s_1, s_2 \in S$, we have $G_\alpha s_1 = G_\alpha s_2$ iff $(S \cap G_\alpha)s_1 = (S \cap G_\alpha)s_2$.  Now, given two pairs of cosets, say $(G_\alpha s_1, G_\alpha s_2)$ and $(G_\alpha s_3, G_\alpha s_4)$ for $s_i \in S$ and $i=1, 2, 3, 4$, we want to find $g \in G$ such that 
	$$
		(G_\alpha s_1 g, G_\alpha s_2 g) = (G_\alpha s_3, G_\alpha s_4).
	$$
	By the double transitivity of $S$, we can find $s \in S$ such that
	$$
		((S \cap G_\alpha) s_1 s, (S \cap G_\alpha) s_2 s) = ((S \cap G_\alpha) s_3, (S \cap G_\alpha) s_4).
	$$
	From this, we obtain
	$$
		(G_\alpha s_1 s, G_\alpha s_2 s) = (G_\alpha s_3, G_\alpha s_4),
	$$
	and it follows that $G$ acts doubly transitively on the cosets of $G_\alpha$ and therefore also acts doubly transitively on $\Omega$.
\end{proof}
	
Thus, the only uniprimitive permutation group of almost simple type is given by $\PSU(4,2)$ and its action on the cosets of a subgroup of index 27. In particular, when searching for $B$-groups of order $p^a$ for $a \geq 4$, we need not consider the almost simple case whatsoever.

On the other hand, $\PSU(4,2)$ in its uniprimitive action of 27 points indeed possesses regular subgroups, including both of the two nonabelian groups of order 27, as is easily verified using GAP \cite{Gap}. We state this as a lemma.

\begin{lemma}\label{reg-of-PSU42}
	Let $\PSU(4,2)$ act uniprimitively on a set $\Omega$ with $|\Omega| = 27$. Under this action, $\PSU(4,2)$ contains regular subgroups isomorphic to both the semidirect product $C_9 \rtimes C_3$ and the Heisenberg group $H_3(3)$.
\end{lemma}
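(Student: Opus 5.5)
Although the paper says this is easily checked in GAP, I will give a plan that can be verified by hand, with GAP as a backup. Model the action concretely. $\PSU(4,2)\cong W(E_6)'$ is the index-2 subgroup of $W(E_6)$, and its degree-27 action is the action on the 27 lines of a smooth cubic surface; equivalently, it is the action on the 27 weights of the minuscule representation of $E_6$. An alternative model, better suited to exhibiting $3$-subgroups, is $\PSU(4,2)\cong\mathrm{PSp}(4,3)$, or the index-2 subgroup of $O_5(3)$, acting on the 27 nonisotropic points of one type in $\F_3^5$. The model I would actually use is the ``affine'' description of the 27 lines coming from the Hesse configuration. Identify $\Omega$ with $\F_3^3$. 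A Sylow $3$-subgroup $P$ of $\PSU(4,2)$ has order $3^4=81$. The stabilizer of a point has order $25920/27=960$, so it is a $3'$-group. Hence every subgroup of order 27 of $P$ meets every point stabilizer trivially. Such a subgroup therefore has all orbits of length 27, i.e.\ it is regular.

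The key observation is this: since the order of $G_\alpha$ is $960$, prime to 3, \emph{every} subgroup of order 27 in $\PSU(4,2)$ acts regularly on $\Omega$. So the lemma reduces to a purely group-theoretic claim about the Sylow $3$-subgroup $P$ of $\PSU(4,2)$, namely that $P$ contains subgroups isomorphic to $C_9\rtimes C_3$ and to $H_3(3)$. The steps are as follows.

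First, identify $P$. Using $\PSU(4,2)\cong\mathrm{PSp}(4,3)$, $P$ is the image of the unipotent radical-type Sylow subgroup of $\mathrm{Sp}(4,3)$. As $\mathrm{Sp}(4,3)$ has centre $\pm1$ of order prime to 3, $P$ is isomorphic to a Sylow $3$-subgroup of $\mathrm{Sp}(4,3)$: unitriangular symplectic matrices, of order $3^4$ and exponent $9$. Indeed, a regular unipotent element of $\mathrm{Sp}(4,3)$ has a single Jordan block of size $4>3$, hence order $9$.

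Second, find $H_3(3)$ inside $P$. Take the unipotent radical of the Siegel parabolic, which is elementary abelian of order 27 (symmetric $2\times2$ matrices). Alternatively, take the radical of the Klingen parabolic, the stabilizer of an isotropic point. That radical is a Heisenberg group of order $3^3$ and exponent $3$, i.e.\ $H_3(3)$.

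Third, find $C_9\rtimes C_3$. Let $u$ be a regular unipotent element of order 9. Its centralizer in $P$ is $\langle u\rangle$ times the centre, or at least contains an element $v$ of order 3 with $\langle u,v\rangle$ of order 27. A subgroup of order 27 containing an element of order 9 is either $C_9\times C_3$, $C_{27}$, or $C_9\rtimes C_3$; $C_{27}$ is excluded by the exponent. So it suffices to choose $v\in N_P(\langle u\rangle)\setminus C_P(u)$ of order 3, giving the nonabelian option.

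The main obstacle is this last step: certifying that a nonabelian subgroup of order 27 and exponent 9 exists in $P$, rather than only $C_9\times C_3$. I would first try to show that $P$ has maximal class, so that $|Z(P)|=3$. Then any maximal subgroup containing $u$ is nonabelian or equals $C_P(u)$, and counting the four maximal subgroups of $P$ yields one isomorphic to $C_9\rtimes C_3$. Failing a clean hand computation, I would fall back on a direct GAP verification: construct \texttt{PrimitiveGroup(27,\dots)} with socle $\PSU(4,2)$, enumerate conjugacy classes of subgroups of order 27, and check \texttt{IdGroup} values $[27,4]$ and $[27,3]$ together with transitivity on the 27 points.
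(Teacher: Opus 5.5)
Your reduction rests on an arithmetic error. Since $960=2^6\cdot 3\cdot 5$ is divisible by $3$, the point stabilizer $G_\alpha\cong 2^4:A_5$ is not a $3'$-group. Its elements of order $3$ fix $\alpha$ (in fact each fixes exactly $9$ points). A Sylow $3$-subgroup $P$ is therefore transitive with point stabilizers of order $3$, and a subgroup of order $27$ is regular only if it avoids all of them.

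This fails for some subgroups. Write $P\cong C_3\wr C_3=B\rtimes\la t\ra$. The four subgroups of order $27$ in $P$ are the base $B\cong C_3^3$, one copy of $H_3(3)$, and two copies of $C_9\rtimes C_3$. Up to conjugacy, $B$ is the Siegel radical you mention, i.e. the normal $3^3$ of the maximal subgroup $3^3:S_4$. In the $E_6$-weight model, $B=\la r_1,r_2,r_3\ra$, where $r_i$ is an element of order $3$ of $W(A_2)$ in the $i$th factor of an $A_2^3$ subsystem. Then $B$ has three orbits of length $9$ on the $27$ weights, namely those of $(3,\bar 3,1)$, $(1,3,\bar 3)$ and $(\bar 3,1,3)$. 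Each $r_i$ fixes one of these blocks pointwise. So $B$ is a non-regular subgroup of order $27$. The lemma therefore does not reduce to the purely group-theoretic claim that $P$ contains abstract copies of $H_3(3)$ and $C_9\rtimes C_3$. That claim, which you handle more or less correctly, is only half the proof.

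The missing step is a check that the copies you choose meet every point stabilizer trivially. The paper's GAP verification does exactly this through the permutation character $1+\chi_6+\chi_{20}$: a subgroup $Q$ of order $27$ is regular iff this character restricts to the regular character of $Q$.

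By hand you can finish as follows. Take $t$ of order $3$ permuting the three $A_2$-factors cyclically, and set $r_2=r_1^t$ and $r_3=r_2^t$.
\begin{enumerate}
\item Every maximal subgroup $M\neq B$ of $P$ satisfies $M\cap B=[B,t]=\{r_1^ar_2^br_3^c : a+b+c\equiv 0 \pmod 3\}$.
\item The elements of $B$ with fixed points are exactly the nontrivial elements of $\la r_1\ra\cup\la r_2\ra\cup\la r_3\ra$. Since $[B,t]$ meets each $\la r_i\ra$ trivially, its nonidentity elements are fixed-point-free.
\item Elements of $M\setminus B$ permute the three blocks cyclically, so they fix nothing either.
\end{enumerate}
Hence $M$ is semiregular of order $27$, i.e.\ regular, and these $M$ supply $H_3(3)$ (the Klingen radical) and $C_9\rtimes C_3$. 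In particular, the transitivity check in your GAP fallback is the substantive part of the verification, not an optional extra.

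Two smaller slips:
\begin{itemize}
\item The nonisotropic points of $\F_3^5$ form orbits of sizes $36$ and $45$, so the $27$-point action is not on them. It is, for example, the action on the $27$ totally isotropic lines of the Hermitian space $\F_4^4$.
\item For $u\in P$ of order $9$ one has $C_P(u)=\la u\ra$. So only your normalizer argument, not the centralizer one, yields the nonabelian subgroup of order $27$.
\end{itemize}
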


One approach to verify this lemma is to construct the permutation character in GAP and to search for the subgroups of order 27 for which this character restricts to the regular character. Since the action of $\PSU(4,2)$ is of rank 3, the permutation character has three irreducible constituents, each distinct from one another (see \cite[Corollary 5.16]{Is} for instance). One then verifies from inspecting the character table of $\PSU(4,2)$ that the permutation character is the sum of the principal character and the unique irreducible characters of degrees 6 and 20.

\section{The Product Action}

For the purpose of this section, any wreath product $S_k \wr S_\ell$ is assumed to be acting with the product action on $k^\ell$ points. Let $G$ be a primitive subgroup of $S_k \wr S_\ell$. Any regular subgroup of $G$ is also a regular subgroup of $S_k \wr S_\ell$. Since $S_k \wr S_\ell$ is never doubly transitive (except in the trivial cases where either $k$ or $\ell$ equal 1, which are subsumed by the almost simple case), it suffices only to consider the regular subgroups of $S_k \wr S_\ell$ for the purposes of classifying $B$-groups.

\begin{lemma}
	Let $p$ be a prime and let $a$ be an integer where $a < p$. The regular subgroups of the wreath product $S_p \wr S_a$ in its product action on $p^a$ points are precisely the Sylow $p$-subgroups of $S_p \wr S_a$ and are elementary abelian.
\end{lemma}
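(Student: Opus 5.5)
The plan is to compare orders and then pin down the Sylow subgroup explicitly. Let $W = S_p \wr S_a = S_p^a \rtimes S_a$ act in product action on $\Omega^a$ with $|\Omega| = p$.

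\textbf{Step 1: the Sylow subgroups of $W$.} We have $|W| = (p!)^a \cdot a!$. Since $a < p$, the prime $p$ does not divide $a!$. Also $p \,\|\, p!$. Hence $|W|_p = p^a$. A Sylow $p$-subgroup is therefore conjugate to $P = \langle c\rangle^a \le S_p^a$, where $c$ is a $p$-cycle. This group is elementary abelian of order $p^a$. It acts regularly on $\Omega^a$, because each factor $\langle c\rangle$ acts regularly on its coordinate. Conjugates of a regular subgroup are regular, so every Sylow $p$-subgroup of $W$ is a regular subgroup.

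\textbf{Step 2: every regular subgroup is a Sylow $p$-subgroup.} Let $R \le W$ be regular. Then $|R| = |\Omega^a| = p^a$, so $R$ is a $p$-subgroup of order $|W|_p$. Hence $R$ is a Sylow $p$-subgroup, and it is elementary abelian by Step 1.

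\textbf{Main obstacle.} I expect no real obstacle. The only point needing care is the order computation in Step 1. It uses $a < p$, which ensures $p \nmid a!$. This is exactly what forces $R$ into the base group $S_p^a$, where each coordinate projection lands in a subgroup of order $p$ of $S_p$. If the reader prefers to avoid Sylow theory, there is a direct route. Since $p \nmid |S_a|$, the image of $R$ in the top group $S_a$ is trivial, so $R \le S_p^a$. Each projection of $R$ to a factor $S_p$ is a $p$-group, hence contained in a cyclic subgroup of order $p$. Therefore $R$ lies in a product of $a$ cyclic groups of order $p$. That product has order $p^a = |R|$, so $R$ equals it.
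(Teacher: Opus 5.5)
Your proof is correct and follows the paper's argument. Since $p \nmid a!$, the Sylow $p$-subgroups of the base group $S_p^a$, which are elementary abelian and regular, are exactly the Sylow $p$-subgroups of $S_p \wr S_a$. You also spell out the converse direction, which the paper leaves implicit: a regular subgroup has order $p^a = |W|_p$ and is therefore Sylow.
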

\begin{proof}
	Clearly, the Sylow $p$-subgroups of $\prod_{i=1}^a S_p$ are elementary abelian and act regularly under the product action. Since $a < p$, we have that $p \nmid a!$. Thus, the Sylow $p$-subgroups of this direct product will remain the Sylow $p$-subgroups of $S_p \wr S_a$.
\end{proof}

This essentially handles the product action case when handling groups of order $p^q$ for primes $p$ and $q$ where $q < p$. Indeed, the only nontrivial primitive wreath product of degree $p^q$ to consider is $S_p \wr S_q$. 

In handling the $p$-groups of order $p^4$, there are essentially two wreath products to consider, namely $S_p \wr S_4$ and $S_{p^2} \wr S_2$. In the former case and when $p \geq 5$, the regular subgroups are again all elementary abelian $p$-groups by the above lemma. On the other hand, investigating the regular subgroups of $S_{p^2} \wr S_2$ is somewhat more delicate since the Sylow $p$-subgroups are much larger. The lemma below will help us handle the latter situation.

\begin{lemma}\label{fund_prod_lem}
	Let $p \neq 2$ be a prime, and let $H$ be a group of order $p^4$. $H$ is isomorphic to a regular subgroup of $S_{p^2} \wr S_{2}$ in its product action on $p^4$ points iff $H$ possess a pair of subgroups $K_1$ and $K_2$ satisfying the following properties:
	\begin{itemize}
		\item[(a)] $|K_1| = |K_2| = p^2$.
		\item[(b)] $K_1^{g_1} \cap K_2^{g_2} = 1$ for each $g_1,g_2 \in H$.
	\end{itemize}  
\end{lemma}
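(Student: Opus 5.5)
Since $p$ is odd, $p \nmid 2 = |S_2|$, so the Sylow $p$-subgroups of $W = S_{p^2} \wr S_2$ lie in the base group $B = S_{p^2} \times S_{p^2}$. Any regular subgroup $H$ of $W$ has order $p^4$, hence is a $p$-group and so lies in $B$. The plan is therefore to characterize regular subgroups of $B$ acting coordinatewise on $\Omega \times \Omega$ with $|\Omega| = p^2$. Let $\pi_1, \pi_2 \colon H \to S^\Omega$ be the two coordinate projections, and set $K_1 = \ker \pi_2$ and $K_2 = \ker \pi_1$, both normal in $H$.

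For the forward direction I would first show that $\pi_1(H)$ and $\pi_2(H)$ are transitive. This holds because $H$ is transitive on $\Omega \times \Omega$ and the projections of points are equivariant. Next, fix $(\alpha,\beta)$ and let $H_1 = \pi_1^{-1}(\pi_1(H)_\alpha)$, the stabilizer of the first coordinate; it has index $p^2$, so $|H_1| = p^2$. By regularity, $H_1$ acts regularly on $\{\alpha\} \times \Omega$, so $H_1 \cap \ker\pi_2 = 1$. In the same way, the analogous subgroup $H_2$ of order $p^2$ satisfies $H_2 \cap \ker \pi_1 = 1$.

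The natural choice of pair is therefore $H_1, H_2$, the point stabilizers in the two coordinate actions. Conjugates of $H_1$ are the stabilizers $\pi_1^{-1}(\pi_1(H)_{\alpha'})$ for other points $\alpha'$, and similarly for $H_2$. An element lying in a conjugate of $H_1$ and in a conjugate of $H_2$ fixes $\alpha'$ in the first coordinate and $\beta'$ in the second, hence fixes $(\alpha',\beta')$, and so is trivial by regularity. This gives (a) and (b) with $K_1 = H_1$ and $K_2 = H_2$, so I would take these rather than the kernels.

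For the converse, given $K_1, K_2$ satisfying (a) and (b), let $H$ act on $\Omega_1 \times \Omega_2$, where $\Omega_i$ is the set of right cosets of $K_i$, each of size $p^2$. Identifying $\Omega_i$ with a set of size $p^2$ embeds $H$ into $S_{p^2} \times S_{p^2} \le W$; the embedding is faithful because the two cores already intersect trivially by (b). The stabilizer of $(K_1 g_1, K_2 g_2)$ is $K_1^{g_1} \cap K_2^{g_2} = 1$, so $H$ acts semiregularly, and since $|H| = p^4 = |\Omega_1 \times \Omega_2|$ the action is regular.

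The main obstacle is only the bookkeeping in the forward direction, namely checking that the coordinate stabilizers have order exactly $p^2$ and that conjugates correspond to other points. This follows from the transitivity of each projection. The oddness of $p$ is used solely to place $H$ inside the base group.
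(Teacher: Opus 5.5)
Your proposal is correct and takes the same approach as the paper. You place $H$ in the base group because $p$ is odd, and you take $K_1, K_2$ to be the point stabilizers in the two coordinate actions (the paper's $H_{(\alpha,\cdot)}$ and $H_{(\cdot,\beta)}$). You get their orders from orbit--stabilizer, get (b) from regularity, and prove the converse through the product of the two coset actions. The only difference is your brief false start with the kernels $\ker\pi_2, \ker\pi_1$, which you rightly discard since they need not have order $p^2$.
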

	
\begin{proof}
	Let $\Omega$ be a set with $|\Omega| = p^2$. Since we are assuming that $p \neq 2$, any Sylow $p$-subgroup of $S_{\Omega} \wr S_2$ is contained in the base group $S_{\Omega} \times S_{\Omega}$. Thus, we only need to consider regular subgroups of the base group. 
	
	First, assume that $H$ is a regular subgroup of $S^{\Omega} \times S^{\Omega}$. Choose $\alpha \in \Omega$, and let $H_{(\alpha, \cdot)}$ denote the subgroup of $H$ that permutes the set $\{\alpha\} \times \Omega$. In particular, $H_{(\alpha,\cdot)}$ consists of the elements of $H$ which map $(\alpha,\gamma)$ to $(\alpha,\delta)$ for $\gamma,\delta \in \Omega$. Likewise, for $\beta \in \Omega$, define $H_{(\cdot, \beta)}$ to be the subgroup of $H$ that permutes $\Omega \times \{\beta\}$. Set $K_1 := H_{(\alpha,\cdot)}$ and $K_2 := H_{(\cdot,\beta)}$. We claim that this choice of $K_1$ and $K_2$ satisfies the two axioms above.
	
	The fact that $K_1$ and $K_2$ have order $p^2$ will follow from the Orbit-Stabilizer Theorem. Of course, by symmetry, it suffices only to verify the claim for $K_1$. Consider the action of $S^\Omega \times S^\Omega$ on $\Omega$ given by setting $\alpha^{(g,h)} := \alpha^g$. Since $H$ is transitive on $\Omega \times \Omega$, it is also transitive (though not necessarily faithful) under this action on $\Omega$. Also, observe that the stabilizer of $\alpha$ under this new action is $H_{(\alpha,\cdot)}$. Thus, by the Orbit-Stabilizer Theorem, $ H_{(\alpha,\cdot)}$ has order $p^2$, which shows that (a) holds.
	
	Now, we show that (b) holds for our choice of $K_1$ and $K_2$. One sees that any conjugate of $H_{(\alpha,\cdot)}$ in $H$ will be of the form $H_{(\gamma,\cdot)}$ for some $\gamma \in \Omega$. The analogous statement holds as well for $H_{(\cdot,\beta)}$. Thus, in searching for a contradiction, it is of no loss to assume that $H_{(\alpha,\cdot)} \cap H_{(\cdot,\beta)} \neq 1$. If there exists a nonidentity element $(g,h)$ lying in this intersection, then $(\alpha,\beta)^{(g,h)} = (\alpha,\beta)$, and so $(g,h)$ fixes $(\alpha,\beta)$. However, as $H$ is assumed to be regular, this implies $(g,h)$ is the identity. Thus, (b) holds for our choice of $K_1$ and $K_2$. 
	
	Now, we prove the converse. Assume $H$ possesses two subgroups $K_1$ and $K_2$ satisfying the above two properties. Let $\Omega_1$ and $\Omega_2$ denote the set of cosets of $K_1$ and $K_2$ respectively in $H$. Clearly, property (a) implies that $|\Omega_1| = |\Omega_2| = p^2$. The action of $H$ on these sets of cosets gives us a pair of homomorphisms $\rho_1: H \to S^{\Omega_1}$ and $\rho_2: H \to S^{\Omega_2}$. In turn, we get a homomorphism $\rho_1 \times \rho_2: H \to S^{\Omega_1} \times S^{\Omega_2}$ given by $(\rho_1 \times \rho_2)(h) = (\rho_1(h),\rho_2(h))$. It follows from property (b) that $\rho_1 \times \rho_2$ is injective, for otherwise, there would exist a group element $h \in H$ leaving all cosets of both $K_1$ and $K_2$ fixed, which would imply $h \in K_1 \cap K_2$. Likewise, property (b) also implies that no nonidentity element of $\bar{H}$ stabilizes any point of $\Omega_1 \times \Omega_2$, for otherwise, this would imply that there exists conjugates of $K_1$ and $K_2$ that intersect nontrivially. The fact that $\bar{H}$ acts regularly on $\Omega_1 \times \Omega_2$ now follows from the Orbit-Stabilizer Theorem and the fact that $|\bar{H}| = |H| = |\Omega_1 \times \Omega_2| = p^4$. This proves the lemma.
\end{proof}
	
The utility of this lemma is that we only need to consider the purely group-theoretic properties of the groups in Table 1.

Before going on to prove the main result of this section, we state another lemma without proof which will be of use to us during our analysis. This lemma appears as Corollary 12.3.1 in \cite{Hall}.

\begin{lemma}\label{hall-pet}
	Let $H$ be any $p$-group for a prime $p$, and assume that the nilpotency class of $H$ is less than $p$. Then, for any $g,h \in H$ and any integer $a$, we have that
	$$
	(gh)^{p^a} = g^{p^a}h^{p^a}x^{p^a}y^{p^a}
	$$
	for some $x,y \in H'$. In particular, if $H$ order $p^4$ and $p \geq 5$, then since $H'$ has exponent at most $p$ (see Table 1), it follows that $(gh)^p = g^ph^p$.
\end{lemma}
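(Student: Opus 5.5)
The plan is to derive the statement from the Hall–Petrescu collection formula, taking it in the form proved in \cite{Hall}. For any group $H$, any $g,h\in H$ and any integer $n\ge 1$ there are elements $c_i\in\gamma_i(\la g,h\ra)$, for $2\le i\le n$, such that
$$
g^n h^n=(gh)^n\, c_2^{\binom n2}c_3^{\binom n3}\cdots c_n^{\binom nn}.
$$
I would quote this rather than reprove it. Its proof is a collection argument inside the free group on two generators, and I expect reproducing it to be the main obstacle if one insists on a self-contained proof.

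First, take $n=p^a$ and use the hypothesis that $H$ has class less than $p$. This gives $\gamma_i(H)=1$ for $i\ge p$, so only the factors $c_i^{\binom{p^a}{i}}$ with $2\le i\le p-1$ survive.

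Next comes the arithmetic input. For $1\le i\le p-1$ we have $v_p(i)=0$, so $p^a$ divides $\binom{p^a}{i}$; this follows from Kummer's theorem, or from $\binom{p^a}{i}=\frac{p^a}{i}\binom{p^a-1}{i-1}$. Hence each surviving factor is $u_i^{p^a}$, where $u_i=c_i^{\binom{p^a}{i}/p^a}\in H'$. Solving for $(gh)^{p^a}$ then expresses $(gh)^{p^a}$ as $g^{p^a}h^{p^a}$ times a product of inverses of $p^a$-th powers of elements of $H'$. Each inverse $u_i^{-p^a}=(u_i^{-1})^{p^a}$ is again such a power, so this is the asserted shape, with the correction term a product of $p^a$-th powers of elements of $H'$. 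I would note that for general class there may be up to $p-2$ such factors, so the ``two elements $x,y$'' wording should be read as a product of such powers.

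For the ``in particular'' clause, suppose $|H|=p^4$ and $p\ge 5$. A nonabelian group of order $p^4$ has class at most $3<p$, so the first part applies. Table 1 shows $H'\cong C_p$ or $C_p^2$, and $H'=1$ if $H$ is abelian, so $H'$ has exponent at most $p$. Taking $a=1$, every correction factor $u_i^{p}$ is trivial, and therefore $(gh)^p=g^ph^p$.
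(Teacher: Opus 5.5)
Your argument is correct and is the standard derivation: the paper gives no proof of this lemma, only citing Corollary 12.3.1 of Hall, which is obtained from the Hall--Petrescu formula in the way you do it (discard $\gamma_i$ for $i\ge p$, use $p^a\mid\binom{p^a}{i}$ for $i<p$, then invert). Your handling of the ``in particular'' clause, the only part the paper uses, is complete. Your caveat about the two-factor wording is fair, but the literal form does hold. $H'$ also has class less than $p$, so you can apply Hall--Petrescu repeatedly inside $H'$: when merging $z^{p^a}w^{p^a}$ with $w\in\gamma_j(H')$, the new correction terms are $p^a$-th powers of elements of $\gamma_{j+1}(H')$. This collapses any product of $p^a$-th powers of elements of $H'$ into a single $z^{p^a}$ with $z\in H'$, which also follows from P.~Hall's theorem that $p$-groups of class less than $p$ are regular.
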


We now prove the main result of this section:

\begin{theorem}\label{main_thm_prod}
	Let $H$ be a nonabelian group of order $p^4$ for a prime $p \geq 5$. Then $H$ appears as a regular subgroup of $S_{p^2} \wr S_2$ in its product action on $p^4$ points unless $H$ is isomorphic to a group of type (vi), (vii), (xii), or (xiii) in Table 1.
\end{theorem}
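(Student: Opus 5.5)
The plan is to reduce condition (b) of Lemma \ref{fund_prod_lem} to an exact factorization condition, and then handle the groups of Table 1 one type at a time. For subgroups $K_1,K_2$ of $H$ and $g\in H$, the double coset $K_1gK_2$ has size $|K_1||K_2|/|K_1\cap gK_2g^{-1}|$. If $|K_1|=|K_2|=p^2$ and $K_1\cap K_2=1$, then $|K_1K_2|=p^4=|H|$. So $H$ is a single double coset $K_1K_2$, and every other double coset $K_1gK_2$ equals it and also has size $p^4$, which forces $K_1\cap gK_2g^{-1}=1$ for every $g$. Conjugating by $g_1$ then gives $K_1^{g_1}\cap K_2^{g_2}=1$ for all $g_1,g_2\in H$. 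The converse is trivial. Hence, for $p\geq 5$, $H$ is a regular subgroup of $S_{p^2}\wr S_2$ if and only if $H$ has two subgroups of order $p^2$ meeting trivially, i.e. an exact factorization $H=K_1K_2$ with $|K_1|=|K_2|=p^2$.

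\emph{Existence for types (viii), (ix), (x), (xi), (xiv), (xv).} For each type I will exhibit such a pair directly from the presentations in Table 1, using Lemma \ref{hall-pet} (so $(gh)^p=g^ph^p$) to compute element orders.
\begin{itemize}
\item Type (viii): take $K_1=\la a\ra$ and $K_2=\la b\ra$; the presentation is a split extension, so they meet trivially.
\item Type (ix): take $K_1=\la a\ra$ and $K_2=\la b,c\ra$.
\item Type (xiv): $H\cong H_3(p)\times C_p$, and I take $K_1=\la a,c\ra$ and $K_2=\la b,d\ra$.
\item Types (x), (xi), (xv): I will search for a second subgroup of order $p^2$ avoiding a chosen first one, generated by products such as $c a^{i}$ or $d c^{j}$. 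The $p$-th power map is a homomorphism by Lemma \ref{hall-pet}, so the orders of these products are easy to control. For (xi) I expect to use $\gamma=0$, which should make the $p$-th powers of suitable elements outside $\la a,b\ra$ trivial, so that an elementary abelian $K_2$ of order $p^2$ can be chosen disjoint from $\la a\ra$.
\end{itemize}

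\emph{Non-existence for types (vi), (vii), (xii), (xiii).} Here the aim is to find a nontrivial subgroup $Z_0$ contained in every subgroup of order $p^2$. For $p\geq 5$ these groups are regular, so $\Omega_1(H)$ has exponent $p$ and $\mho_1(H)$ consists of $p$-th powers.
\begin{itemize}
\item Type (vi): $\Omega_1(H)=\la a^{p^2},b\ra$ is the only elementary abelian subgroup of order $p^2$. Any cyclic subgroup of order $p^2$ has its order-$p$ subgroup inside $\mho_2(H)=\la a^{p^2}\ra$. So every subgroup of order $p^2$ contains $\la a^{p^2}\ra$.
\item Type (vii): cyclic subgroups of order $p^2$ contain $\mho_1(H)=\la a^p\ra$. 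The non-cyclic ones lie in $\Omega_1(H)=\la a^p,b,c\ra\cong H_3(p)$, and every subgroup of order $p^2$ in $H_3(p)$ contains its center $\la a^p\ra$.
\item Types (xii), (xiii): I will run the same argument with $Z_0=\Cent(H)\cong C_p$. A subgroup of order $p^2$ containing an element of order $p^2$ contains $\mho_1(H)$, which I expect is $\Cent(H)$. So it remains to show that $\Omega_1(H)$ has order $p^2$ and contains $\Cent(H)$, or more generally that every elementary abelian subgroup of order $p^2$ contains $\Cent(H)$.
\end{itemize}

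The main obstacle is the computation for types (xii) and (xiii), and its contrast with (xi). There I must compute $(a^ib^jc^k)^p$ using the relations with parameter $\gamma$. I expect the result to be a quadratic expression in $(i,k)$ whose nonvanishing off the subgroup $H'$ depends on whether $\gamma$ is nonzero. That nonvanishing is what forbids an elementary abelian $K_2$ of order $p^2$ avoiding $\Cent(H)$ when $\gamma\neq 0$.
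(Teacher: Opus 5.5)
The genuine gap is in types (xii) and (xiii): the mechanism you anticipate there cannot work. Your first option, that $\Omega_1(H)$ has order $p^2$, is false. So is the expectation that $(a^ib^jc^k)^p$ comes out as a $\gamma$-dependent quadratic in $(i,k)$. You yourself invoke Lemma~\ref{hall-pet}, which makes $x\mapsto x^p$ a homomorphism for these groups, so $(a^ib^jc^k)^p=a^{ip}$, independent of $k$ and of $\gamma$. Consequently $c$ has order $p$ in each of (xi), (xii), (xiii), and $\Omega_1(H)=\la a^p,b,c\ra$ has order $p^3$ in all three. No $p$-th power computation can separate (xi) from (xii)/(xiii). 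Likewise, your stated reason for (xi) (that $\gamma=0$ makes certain $p$-th powers trivial) is not what is going on. As written, the non-existence part for (xii) and (xiii) is not established, and the route you outline would not establish it.

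The missing idea is the commutator $[b,c]=a^{\gamma p}$. For $\gamma\neq 0$ it makes $\Omega_1(H)=\la a^p,b,c\ra$ nonabelian of exponent $p$, i.e.\ isomorphic to $H_3(p)$, with center $\la a^p\ra=\Cent(H)$. Every subgroup of order $p^2$ of $H_3(p)$ contains the center, since otherwise it and the center would generate an abelian group of order $p^3$. Hence every elementary abelian subgroup of order $p^2$ in $H$ contains $\Cent(H)$. Combined with your correct observation that cyclic subgroups of order $p^2$ contain $\la a^p\ra$, no two subgroups of order $p^2$ meet trivially. This is exactly the argument you already gave for type (vii), and it is the paper's argument. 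For $\gamma=0$ the same $\Omega_1(H)$ is elementary abelian of order $p^3$, which is why $K_1=\la a\ra$, $K_2=\la b,c\ra$ works in (xi).

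The rest of your proposal is sound. The double-coset reduction of condition (b) of Lemma~\ref{fund_prod_lem} to $K_1\cap K_2=1$ is correct and spares you the paper's conjugate-by-conjugate checks. For (x) and (xv) no search is needed: the normal forms $a^ib^jc^k$ and $a^ib^jc^kd^l$ show $\la a\ra\cap\la b,c\ra=1$ in (x) (where $[b,c]=1$) and $\la a,d\ra\cap\la b,c\ra=1$ in (xv), with all four subgroups of order $p^2$.
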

\begin{proof}
	As in the proof of Lemma 4.2, since we assume $p \neq 2$, any regular subgroup of $S_{p^2} \wr S_2$ is a regular subgroup of the base group $S_{p^2} \times S_{p^2}$, so it suffices to only search for regular subgroups of the base group.
	
	We first show that the groups of type (vi) and (vii) cannot be embedded as regular subgroups in $S_{p^2} \times S_{p^2}$, so assume $H$ is isomorphic either one of these groups. By Lemma \ref{fund_prod_lem}, it suffices to show that we cannot produce a pair of trivially intersecting subgroups $K_1$ and $K_2$ both having order $p^2$. Assume for a contradiction that we are able to choose such a pair of groups. If $H$ is either of type (vi) or (vii), then $\Cent(H) \cong C_{p^2}$. Thus, $K_1$ and $K_2$ cannot both intersect nontrivially with the center. However, if $K_1 \cap \Cent(H) = 1$ for instance, then $\Cent(H)K_1$ is abelian of order $p^4$, an impossibility.
	
	We now handle the groups of types (xii) and (xiii) simultaneously, both of which have exponent $p^2$. We claim that all subgroups of order $p^2$ intersect with the center, which in either case is seen to be $\la a^p \ra$. Setting $H$ to be either one of these groups, it follows from Lemma 4.3 that the subgroup $\Omega_1(H)$ of $H$ generated by all elements of order $p$ is isomorphic to $H_3(p)$, which has exponent $p$. Also, any subgroup of $H_3(p)$ isomorphic to $C_p \times C_p$ intersects with $\Cent(H_3(p))$ as is easily verified. Seeing that $\Cent(\Omega_1(H)) = \Cent(H)$, we conclude that any subgroup of $H$ isomorphic to $C_p \times C_p$ is contained in $\Omega_1(H)$ and therefore contains $\Cent(H)$. On the other hand, any cyclic subgroup of order $p^2$ contains the center $\la a^p \ra$. Indeed, assume $o(a^ib^jc^k) = p^2$ for integers $i$, $j$, and $k$. By Lemma \ref{hall-pet}, we have $(a^ib^jc^k)^p = a^{ip}(b^jc^k)^p = a^{ip}$. It follows now that every subgroup of order $p^2$ contains $\Cent(H)$, and consequently no suitable $K_1$ and $K_2$ can be chosen satisfying the conditions of Lemma \ref{fund_prod_lem}.
	
	For each of the remaining types of groups in Table 1, we proceed by constructing subgroups $K_1$ and $K_2$ which satisfy properties (a) and (b) in Lemma \ref{fund_prod_lem}.
	
	For the group of type (viii) in Table 1, we can simply choose $K_1 = \la a \ra$ and $K_2 = \la b \ra$. Both groups are cyclic of order $p^2$. To see that property (b) of Lemma \ref{fund_prod_lem} holds, observe that $\la a^p \ra$ and $\la b^p \ra$ are distinct subgroups of the group's center. 
	
	We can handle the groups of types (ix), (x) and (xi) simultaneously. Each of these groups possess generators $a$, $b$, and $c$ where $o(a) = p^2$ and $\la b,c \ra \cong C_p \times C_p$. Set $H$ to be either one of these groups. We claim that setting $K_1 = \la a \ra$ and $K_2 = \la b,c \ra$ will satisfy the properties in Lemma \ref{fund_prod_lem}. As above, it follows from Lemma \ref{hall-pet} that any cyclic subgroup of order $p^2$ in $H$ contains the central subgroup $\la a^p \ra$. From this, it follows that no conjugate of $\la b,c \ra$ intersects with $\la a \ra$. Otherwise, $a^p$ would be conjugate to an element of $\la b,c \ra$, and $a$ would therefore be conjugate to an element of order $p^2$ whose $p$th power lies in $\la b,c \ra$, an impossibility.
	
	Setting $H$ to be the group of type (xiv) in Table 1, we set $K_1 = \la a,c \ra$ and $K_2 = \la b,d \ra$. Both groups are isomorphic to $C_p \times C_p$. One observes that $\la a,c \ra$ is normal. On the other hand, one calculates that the conjugates of $\la b,d \ra$ are of the form $\la b, a^id \ra$ for $i \in \Z$. Now, $\la a,c \ra \cap \la b, a^id \ra = 1$ for each $i$, so the two chosen subgroups satisfy the conditions in Lemma \ref{fund_prod_lem}.
	
	Lastly, setting $H$ to be the group of type (xv) in Table 1, we set $K_1 = \la a,d \ra$ and $K_2 = \la b,c \ra$. Again, both groups are isomorphic to $C_p \times C_p$. Observe that $\la a,b,c \ra \cong C_p^3$ and that $H \cong \la a,b,c \ra \rtimes d$. In particular, $\la a,b,c \ra \cap \la a,d \ra^g = \la a \ra = \Cent(H)$ for any $g \in H$.  Now, any conjugate of $\la b,c \ra$ lies in $\la a,b,c \ra$ since $\la a,b,c \ra \triangleleft H$. On the other hand, $\la b,c \ra \cap \Cent(H) = 1$, so no conjugate of $\la b,c \ra$ will intersect with the center, and it follows that no conjugate of $\la a,d \ra$ will intersect with any conjugate of $\la b,c \ra$. Thus, our choice of $K_1$ and $K_2$ satisfy the conditions in Lemma \ref{fund_prod_lem}.
\end{proof}

\section{Groups of Affine Type}
	
We now handle the groups of affine type. It will follow from the below proposition that the Sylow $p$-subgroups of $\AGL(4,p)$ will have exponent $p$, and so, any regular subgroup of $\AGL(4,p)$ under its action on the vector space $\F_p^4$ will have exponent $p$ as well. However, we have already established in the previous section that the two nonabelian exponent-$p$ groups of order $p^4$ are not $B$-groups since they appear as regular subgroups of the primitive wreath product $S_{p^2} \wr S_2$. Thus, for primes at least 5, no new groups are excluded from being $B$-groups from considering the affine case.

\begin{proposition}
	If $p$ is a prime and $n < p$, then a Sylow $p$-subgroup of $\AGL(n,p)$ has exponent $p$. In particular, any regular subgroup of $\AGL(n,p)$ in its action on the vector space $\F_p^n$ for $n < p$ will have exponent $p$. 
\end{proposition}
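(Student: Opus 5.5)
Embed $\AGL(n,p)$ in $\GL(n+1,p)$ via the standard map sending $x \mapsto xA + v$ to the block matrix
$$
\begin{bmatrix} A & 0 \\ v & 1 \end{bmatrix},
$$
which is an injective homomorphism. A Sylow $p$-subgroup of $\AGL(n,p)$ then sits inside some Sylow $p$-subgroup of $\GL(n+1,p)$. All Sylow $p$-subgroups of $\GL(n+1,p)$ are conjugate to the group of unitriangular matrices $H_{n+1}(p)$, so it suffices to show that every element of $H_{n+1}(p)$ has order dividing $p$ when $n < p$, that is, when $n+1 \le p$.

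For this, write a unitriangular matrix as $U = I + N$ with $N$ strictly upper triangular of size $n+1$, so that $N^{n+1} = 0$. Since $I$ and $N$ commute, the binomial theorem gives
$$
U^p = (I+N)^p = \sum_{k=0}^{p} \binom{p}{k} N^k .
$$
Over $\F_p$, the binomial coefficients $\binom{p}{k}$ vanish for $0<k<p$, so $U^p = I + N^p$. Because $p \ge n+1$, we have $N^p = 0$, hence $U^p = I$. Therefore $H_{n+1}(p)$ has exponent $p$, and so does every subgroup of it, including each Sylow $p$-subgroup of $\AGL(n,p)$.

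For the second assertion, a regular subgroup of $\AGL(n,p)$ acting on $\F_p^n$ has order $|\F_p^n| = p^n$, so it is a $p$-subgroup. It is therefore contained in some Sylow $p$-subgroup, and by the first part it has exponent $p$ (it is nontrivial when $n \ge 1$).

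No step is a real obstacle. The only points needing care are that the affine embedding is a homomorphism, and the inequality $n+1 \le p$ that forces $N^p = 0$; this is exactly where the hypothesis $n<p$ is used.
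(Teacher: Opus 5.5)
Your proof is correct, but it takes a different route from the paper's.

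The paper stays inside $\AGL(n,p)$. It takes the Sylow subgroup $C_p^n \rtimes H_n(p)$ and writes the $p$-th power of $x \mapsto xA+v$ as $xA^p + v\sum_{i=0}^{p-1}A^i$. It cites as well known that $H_n(p)$ has exponent $p$ for $n \le p$, which gives $A^p=I$. It then shows $\sum_{i=0}^{p-1}(I+N)^i=\sum_{j}\binom{p}{j+1}N^j$ vanishes using the hockey-stick identity; only the last term $N^{p-1}$ needs the hypothesis $n<p$.

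You instead linearize via the embedding into $\GL(n+1,p)$. There the single expansion $(I+N)^p=I+N^p$ for $(n+1)\times(n+1)$ unitriangular matrices handles the linear and translation parts at once. In fact the paper's vector $v\sum_{i=0}^{p-1}A^i$ is exactly the lower-left block of your $M^p$, so both arguments verify the same identity from different sides.

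What your version buys:
\begin{itemize}
\item It is shorter and avoids the binomial-sum identity.
\item It proves, rather than cites, the exponent of the unitriangular group.
\item It makes the threshold $n<p$ transparent. The count $p^n\cdot p^{n(n-1)/2}=p^{n(n+1)/2}$ shows that Sylow $p$-subgroups of $\AGL(n,p)$ are already Sylow in $\GL(n+1,p)$, hence isomorphic to $H_{n+1}(p)$. This explains the paper's remark that the bound is sharp: for example, $H_5(3)$ has exponent $9$.
\end{itemize}
The paper's version has only the modest advantage of working directly with affine maps on $\F_p^n$, the action the section is about.
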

\begin{proof}
	We can express $\AGL(n,p)$ as $C_p^n \rtimes \GL(n,p)$. We can choose a Sylow $p$-subgroup of $\GL(n,p)$ to be the subgroup consisting of upper unitriangular matrices, which we denote as $H_n(p)$. The group $H_n(p)$ is well known to have exponent $p$ when $n \leq p$.
	
	Now, a Sylow $p$-subgroup of $\AGL(n,p)$ is isomorphic to $C_p^4 \rtimes H_n(p)$. We take elements of this group to be of the form $xA + v$, where $x$ and $v$ are row vectors of length $n$ with entries in $\F_p$ and $A \in H_n(p)$. One now sees that the $p$-fold composition of this affine transformation is given by
	$$
		xA^p + v(A^{p-1} + A^{p-2} + \ldots + I_n).
	$$
	As $H_n(p)$ has exponent $p$, the matrix $A^p$ is the identity, so it suffices to show that $\sum_{i=0}^{p-1} A^i = 0$. 
	
	We write $A = I_n + N$ where the entries of $N$ are zero on and below the main diagonal. We have
	$$
		\sum_{i=0}^{p-1} A^i = \sum_{i=0}^{p-1} (I_n + N)^i = \sum_{i=0}^{p-1} \sum_{j=0}^i \binom{i}{j} N^j = \sum_{j=0}^{p-1} N^j \sum_{i=j}^{p-1} \binom{i}{j}.
	$$ 
	Using Fermat's Identity, we have $\sum_{i=j}^{p-1} \binom{i}{j} = \binom{p}{j+1}$, so the above sum is equal to $$
		\sum_{j=0}^{p-1} \binom{p}{j+1} N^j = \binom{p}{1}I_n + \binom{p}{2}N + \ldots + \binom{p}{p-1}N^{p-2} + \binom{p}{p}N^{p-1}.
	$$
	Now, all of these binomial coefficients except for the last vanish in characteristic $p$. However, the last term vanishes as well since $N^n = 0$ and we assume $n < p$. Thus, the sum is zero, as desired.
\end{proof}

We remark that our assumption above that $n < p$ is necessary. For instance, the Sylow 3-subgroups of $\AGL(4,3)$ have exponent 9.

\section{Groups of order 16 and 81}
	
The groups of orders 16 and 81 are small enough to handle with computer calculations performed in GAP \cite{Gap}. For simplicity, rather than creating a table as we have in Table 1, we instead use the GAP identifiers to refer to each group when necessary. We list the $B$-groups for each of the two orders under consideration in their own lemmas. The lemmas were verified using GAP's Primitive Groups library, and the code used to verify these results is given in Listings 1 and 2 in the appendix.
	
\begin{lemma}\label{order16}
	If $H$ has order 16, then $H$ is a $B$-group iff $H$ is isomorphic to one of the following groups:
	\begin{itemize}
		\item[(i)] $C_{16}$,
		\item[(ii)] $C_{8} \times C_2$,
		\item[(iii)] $Q_{16}$,
		\item[(iv)] $D_{16}$,
		\item[(v)] $Q_8 \times C_2$, or
		\item[(vi)] A central product of the dihedral group $D_8$ with $C_4$ (\texttt{SmallGroup(16,13)}).
	\end{itemize} 
\end{lemma}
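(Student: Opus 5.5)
The statement is a finite check, and I will organize it through the O'Nan--Scott reduction from the introduction. A group $H$ of order $16$ fails to be a $B$-group exactly when some uniprimitive group of degree $16$ contains a regular subgroup isomorphic to $H$. By the almost simple proposition of Section 3, no group of almost simple type is uniprimitive of degree $16$. So the only candidates are primitive subgroups of $S_4 \wr S_2$, which is $S_{16}$'s product-action group with $16 = 4^2$, or of $S_2 \wr S_4$. The latter is imprimitive-degenerate, since $S_2$ on two points gives no primitive product action. The remaining candidates are the affine groups $C_2^4 \rtimes L$ with $L \le \GL(4,2)$ irreducible.

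Concretely, I will run through GAP's library of primitive groups of degree $16$ (there are $22$), discard the $2$-transitive ones, and for each uniprimitive group $G$ compute the conjugacy classes of subgroups of order $16$. For each such subgroup I will test regularity, i.e.\ transitivity together with order $16$, and record its \texttt{IdGroup}. The union of these identifiers is the set of non-$B$-groups. The claim is that this set consists of the eight groups other than \texttt{SmallGroup(16,i)} for $i \in \{1,5,9,7,12,13\}$, which are $C_{16}$, $C_8\times C_2$, $Q_{16}$, $D_{16}$, $Q_8\times C_2$, and the central product.

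I will also give conceptual confirmations where possible. $C_{16}$ is a $B$-group by Theorem \ref{Burnside-cyclic}, $C_8\times C_2$ by Theorem \ref{Kochs-thm}(i), and $D_{16}$ by Theorem \ref{Kochs-thm}(ii). In the other direction, $C_4\times C_4$, $C_4\times C_2^2$ and $C_2^4$ are $4$-factorizable, so Proposition \ref{W25-7} applies. The groups $C_4\rtimes C_4$, $D_8\times C_2$ and the remaining nonabelian groups should be exhibited explicitly as regular subgroups of $S_4\wr S_2$, via a $2$-group analogue of Lemma \ref{fund_prod_lem} that also allows elements swapping the coordinates, or else of some affine uniprimitive group.

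The main obstacle is the positive direction for $Q_{16}$, $Q_8\times C_2$ and \texttt{SmallGroup(16,13)}. There the only argument is exhaustive: one must certify that no uniprimitive group of degree $16$ contains them regularly. This rests entirely on the correctness and completeness of the primitive groups library together with the subgroup enumeration. To keep the computation manageable, it suffices to check the maximal uniprimitive groups, namely $S_4\wr S_2$ and the maximal uniprimitive affine groups, because a regular subgroup of any uniprimitive group lies in a maximal uniprimitive overgroup. Within each of these it suffices to enumerate subgroups of a Sylow $2$-subgroup. I would cross-check the result by the permutation-character criterion used for Lemma \ref{reg-of-PSU42}: a subgroup is regular exactly when the permutation character restricts to its regular character.
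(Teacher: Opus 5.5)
Your proposal is correct and is essentially the paper's argument: the lemma is established by an exhaustive GAP search over the primitive groups of degree 16, discarding the 2-transitive ones and testing the order-16 subgroups of a Sylow 2-subgroup of each uniprimitive group for regularity (the paper does this via the permutation-character criterion in its appendix listings, which you also propose as a cross-check). Your restriction to maximal uniprimitive overgroups and your theoretical confirmations (Burnside, Kochend\"orffer, Wielandt, and the factorizability proposition) are sound refinements, but the paper does not need them.
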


\begin{lemma}\label{order81}
If $H$ has order 81, then $H$ is a $B$-group iff $H$ is isomorphic to one of the following groups:
\begin{itemize}
	\item[(i)] $C_{81}$,
	\item[(ii)] $C_{27} \times C_3$,
	\item[(iii)] $C_{27} \rtimes C_3$ (\texttt{SmallGroup(81,6)}),
	\item[(iv)] A non-split extension of $C_3^2$ by $C_3^2$ (\texttt{SmallGroup(81,10)}), or
	\item[(v)] A central product of the $H_3(3)$ with $C_9$ (\texttt{SmallGroup(81,14)}).
\end{itemize} 
\end{lemma}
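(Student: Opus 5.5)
The statement is Lemma \ref{order81}, which the paper verifies by computer. I would follow the same route: run through the primitive groups of degree $81$ in GAP's Primitive Groups library and, for each group, decide which groups of order $81$ occur as regular subgroups.

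First, using the library, I list all primitive groups of degree $81$ and keep only those that are not $2$-transitive. This is checked by \texttt{Transitivity(G) = 1}, or equivalently by rank greater than $2$. For each uniprimitive $G$, a regular subgroup of order $81$ is a $3$-subgroup, so it lies in some Sylow $3$-subgroup $P$ of $G$. It therefore suffices to compute the subgroups of order $81$ of $P$ up to conjugacy in $G$. I would do this with \texttt{SubgroupsOfOrder} or via the lattice of $P$, which is small enough. For each such subgroup I test whether it is transitive on the $81$ points; transitivity together with order $81$ gives regularity. Each regular subgroup found is then identified with \texttt{IdGroup}. The union over all uniprimitive $G$ of these identifiers is exactly the set of non-$B$-groups of order $81$. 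The claim is that its complement among the $15$ isomorphism types is $\{1,2,6,10,14\}$, namely $C_{81}$, $C_{27}\times C_3$, and SmallGroups $(81,6)$, $(81,10)$, $(81,14)$.

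As a sanity check independent of the computer, several entries should already be forced by earlier results. $C_{81}$ and $C_{27}\times C_3$ are $B$-groups by Theorem \ref{Burnside-cyclic} and Theorem \ref{Kochs-thm}(i). The $3$-factorizable and $9$-factorizable groups, such as $C_9\times C_9$, $C_3^4$, $C_9\rtimes C_3\times C_9$-type products and $H_3(3)\times C_3$, are non-$B$ by Proposition \ref{W25-7}. Lemma \ref{fund_prod_lem}, valid for $p=3$, supplies further non-$B$-groups through $S_9\wr S_2$. The affine groups $C_3^4\rtimes L$ with $L$ irreducible in $\GL(4,3)$ account for the rest, since their Sylow subgroups can have exponent $9$, as remarked after the affine proposition.

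The main obstacle is computational. There are many primitive groups of degree $81$, mostly affine subgroups of $\AGL(4,3)$ together with subgroups of $S_9\wr S_2$ and $S_3\wr S_4$, and some have large Sylow $3$-subgroups. Enumerating all order-$81$ subgroups naively could be expensive. To reduce the work I would use the fact that regular subgroups of a subgroup are regular in any overgroup. It is then enough to treat the maximal uniprimitive groups: $\AGL(4,3)$ cannot be used since it is $2$-transitive, but its maximal irreducible-but-not-transitive-on-nonzero-vectors subgroups can, as can $S_9\wr S_2$ and $S_3\wr S_4$. Every uniprimitive group lies in one of these that is still uniprimitive, up to checking the library's containments. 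For the final claim, that the five listed groups appear in no uniprimitive group, the search must nonetheless be exhaustive over these maximal ones. I would double-check it by a character-theoretic filter: the permutation character must restrict to the regular character on a candidate subgroup.
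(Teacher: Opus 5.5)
Your route is essentially the paper's: loop over the non-$2$-transitive groups of degree $81$ in the primitive groups library, search a Sylow $3$-subgroup for subgroups of order $81$, test regularity, and collect the \texttt{IdGroup}s. Testing transitivity instead of the paper's permutation-character test is equivalent, since either transitivity or fixed-point-freeness, together with order $81$, gives regularity. Your reduction to maximal uniprimitive groups is valid but unnecessary; the paper simply runs over all of them.

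Two concrete errors need fixing.

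\textbf{The identifier set.} Your target set of identifiers is wrong. \texttt{SmallGroup(81,2)} is $C_9\times C_9$, which you yourself correctly call non-$B$. The group $C_{27}\times C_3$ is \texttt{SmallGroup(81,5)}. So the complement the computation must return is $\{1,5,6,10,14\}$, not $\{1,2,6,10,14\}$. Checking the output against your stated set would report a spurious failure and contradict your own sanity check.

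\textbf{The sanity check.} Proposition \ref{W25-7} does not apply to $H_3(3)\times C_3$ or $(C_9\rtimes C_3)\times C_3$. (Your ``$(C_9\rtimes C_3)\times C_9$'' has order $243$ anyway.) An equal-order decomposition of a group of order $81$ has factors of order $9$ or $3$, which are abelian. Hence no nonabelian group of order $81$ is $m$-factorizable, and the proposition only disposes of $C_9\times C_9$, $C_9\times C_3^2$ and $C_3^4$.

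These two nonabelian groups are non-$B$ for a different reason: Lemma \ref{fund_prod_lem} in $S_9\wr S_2$. For example, write $H_3(3)\times\la w\ra$ with $H_3(3)=\la x,y\ra$ and $z=[x,y]$. Take $K_1=\la x,w\ra$ and $K_2=\la y,zw\ra$. Their conjugates are $\la xz^i,w\ra$ and $\la yz^j,zw\ra$, and these always meet trivially.
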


\section{Proof of Theorems A and B}

We have now collected enough results to give proofs for our two main theorems.

\begin{proof}[Proof of Theorem A]
	Let $H$ be a regular subgroup of a uniprimitive permutation group $G$ acting with degree $p^4$. In the case where $H$ is abelian, Theorem A holds by Lemma 2.6. If $p = 2$ or $p = 3$, Theorem A holds by Lemmas \ref{order16} and \ref{order81} respectively. Thus, we may assume that $H$ is nonabelian and that $p \geq 5$ for the remainder of the proof.
	
	By the O'Nan-Scott Theorem, there are essentially three cases to conider: either $G$ is permutation isomorphic to an almost simple group acting on the cosets of a subgroup of index $p^4$, a subgroup of either of the primitive wreath products $S_{p} \wr S_4$ or $S_{p^2} \wr S_2$, or a subgroup of the semidirect product of $C_p^4 \rtimes \GL(4,p)$ acting on the $\F_p$ vector space of dimension 4. By Proposition 3.2, there are no uniprimitive almost simple groups acting with degree $p^4$. Further, in considering the product action, since $S_{p} \wr S_4$ and $S_{p^2} \wr S_2$ are both uniprimitive, we may assume without loss that $G$ is isomorphic to either of these wreath products in the product action case. 
	
	In the product action case, we have shown in Lemma 4.1 that the only regular subgroups of $S_p \wr S_4$ are the elementary abelian $p$ groups of rank 4. Now, the four isomorphism types listed in Theorem \ref{main_thm_prod} which do not appear as regular subgroups of $S_{p^2} \wr S_2$ are precisely the groups listed in part (iii) of Theorem A. It now suffices to determine that the affine group $\AGL(4,p)$ does not contain any of these four groups as regular subgroups. However, by Proposition 5.1, it follows that any regular subgroup of $\AGL(4,p)$ has exponent $p$, but these groups were already shown to not be $B$-groups in Theorem \ref{main_thm_prod}. 
\end{proof}

\begin{proof}[Proof of Theorem B]
	Let $H$ be a regular subgroup of a uniprimitive permutation group $G$ acting with degree $p^q$ for primes $p$ and $q$ where $q < p$. By the O'Nan-Scott Theorem, there are essentially three cases to conider: either $G$ is permutation isomorphic to an almost simple group acting on the cosets of a subgroup of index $p^q$, a subgroup of the primitive wreath product $S_{p} \wr S_q$, or a semidirect product of $C_p^q$ being acted upon by an irreducible subgroup of $\GL(q,p)$. Again, by Proposition 3.2, there is no such uniprimitive group $G$ of almost simple type. By Lemma 4.1, any regular subgroup of $S_p \wr S_q$ in its product action on $p^q$ points will be elementary abelian and hence will have exponent $p$. In case where $G$ has affine type, Proposition 5.1 implies that $H$ will have exponent $p$. Thus, in any case where such an $H$ can exist, $H$ will have exponent $p$. Theorem B now follows.
\end{proof} 

A natural next step would be to attempt a classification of $B$-groups of order $p^5$ where $p$ is any prime. The groups of this order have been classified (cf. \cite{Bender}). If $p \geq 7$, then the O'Nan-Scott Theorem, Proposition 3.2, and Lemma 4.1 imply that we need only search among the uniprimitive subgroups of $\AGL(5,p)$, whose Sylow $p$-subgroups have exponent $p$. Interestingly, in the case $p = 2$, there are no uniprimitive groups acting with degree 32, so all groups of this order are $B$-groups. On the other hand, for the primes 3 and 5, the $B$-groups can be found using the GAP script in Listing 1. Indeed, the Small Groups library contains all groups of orders 243 and 3,125, and the Primitive Groups library contains all groups acting primitively with degree less than 4,096. Thus, a complete classification of the $B$-groups of order $p^4$ amounts to answering the following question:

\begin{question}
	Which regular subgroups of $\AGL(5,p)$ lie in a uniprimitive subgroup of $\AGL(5,p)$?
\end{question}

\printbibliography[heading=bibintoc]

\newpage
\section{Appendix}
	
\begin{lstlisting}[caption={This is a GAP script which searches among the uniprimitive groups of degree ``prime\_power'' for non-B-groups of prime power order. A subgroup is tested for regularity by determining whether the permutation character of the primitive group restricts to the regular character of that subgroup. See Listing 2 below for the function definition of ``IsRegularCharacter.''}, label={lst:gap_ids}]
primGrps := AllPrimitiveGroups(NrMovedPoints,prime_power);
BGroups := [];
		
for G in primGrps do
		
	# Ensures G is not doubly transitive
	if not IsTransitive(Stabilizer(G,1)) then
	 	# Conjugates of regular subgroups are regular.
		# It suffices to restrict to a Sylow subgroup.
		P := SylowSubgroup(G,prime);
		chi := NaturalCharacter(P);	
		classes := Filtered(
		ConjugacyClassesSubgroups(P), 
		c -> Size(Representative(c)) = prime_power );
		reps := List( classes, Representative );
			
		for Hk in reps do
			gid := IdGroup(Hk);
			
			if (not (gid in BGroups)) and IsRegularCharacter(Hk, 	
			RestrictedClassFunction(chi,Hk)) then
			
			Add(BGroups, gid);
			Print("Group with ID ", 
			IdGroup(Hk), 
			" and struct. desc. ", 
			StructureDescription(Hk), 
			" found in ", 	
			StructureDescription(G), ".\n");
fi; od; fi; od;
\end{lstlisting}

\begin{lstlisting}[caption={This is a GAP function definition which determines whether a character ``chi'' of the group ``G'' is the regular character. This is used in the code of Listing 2 to test the regularity of a permutation group.}]
IsRegularCharacter := function( G, chi )
local i;

if not (chi[1] = Size(G)) then
	return false;
fi;

for i in [2..Length(chi)] do
	if not (chi[i] = 0) then
		return false;
	fi;
od;

return true;
end;
\end{lstlisting}

\end{document}